\numberwithin{equation}{section}
\theoremstyle{plain}
\newtheorem{theorem}{Theorem}[section]
\newtheorem{proposition}[theorem]{Proposition}
\newtheorem{lemma}[theorem]{Lemma}
\newtheorem{corollary}[theorem]{Corollary}
\theoremstyle{definition}
\newtheorem{definition}[theorem]{Definition}
\newcommand{\dv}{\operatorname{div}}
\title{Self-similar solutions of semilinear heat equations with positive speed}
\author{Kyeongsu Choi}
\address{School of Mathematics, Korea Institute for Advanced Study, 85 Hoegiro, Dongdaemun-gu, Seoul 02455, Republic of Korea.}
\email{choiks@kias.re.kr}
\author{Jiuzhou Huang}
\address{School of Mathematics, Korea Institute for Advanced Study, 85 Hoegiro, Dongdaemun-gu, Seoul 02455, Republic of Korea.}
\email{jiuzhou@kias.re.kr}
\begin{document}
\allowdisplaybreaks
\maketitle
\begin{abstract}
    We classify the smooth self-similar solutions of the semilinear heat equation $u_t=\Delta u+|u|^{p-1}u$ in $\mathbb{R}^n\times (0,T)$  satisfying an integral condition for all $p>1$ with positive speed. As a corollary, we prove that finite time blowing up solutions of this equation on a bounded convex domain with $u(\cdot,0)\geq 0$ and $u_t(\cdot,0)\geq 0$ converges to a positive constant after rescaling at the blow-up point for all $p>1$.   
\end{abstract}
\section{Introduction}
In this paper, we consider the self-similar solutions of the and the blow up behaviour of the semilinear heat equation
\begin{equation}\label{eq0}
    u_t=\Delta u+|u|^{p-1}u=:\tilde{F}(u)\quad \text{ in }\Omega\times(0,T),
\end{equation}
where $\Omega\subset\mathbb{R}^n$ is a domain in $\mathbb{R}^n$, $p>1$ is a constant. 

 Suppose $u$ is a smooth solution to (\ref{eq0}) on $\Omega\times(0,T)$. $u$ is said to be self-similar about $(a,T)\in\Omega\times\mathbb{R}_+$. If $u(x,t)=\lambda^{\frac{2}{p-2}}u(a+\lambda(x-a),T+\lambda^2(t-T))$ for any $\lambda>0$. A fundamental tool to study self-similar solutions is the similarity variables. Define
\begin{equation}\label{wat}
    \begin{aligned}
    &y=\tfrac{x-a}{\sqrt{T-t}},\quad s=-\log(T-t),\\
    &w_{(a,T)}(y,s)=e^{-\frac{s}{p-1}}u(a+ye^{-\frac{s}{2}},T-e^{-s}), \quad (y,s)\in D_{a,T,\Omega},
\end{aligned}
\end{equation}
where 
\begin{equation}\label{datomega}
    D_{a,T,\Omega}=\{(y,s)\in\mathbb{R}^{n+1}|a+ye^{-\frac{s}{2}}\in\Omega,s> -\log T\}.
\end{equation}
Then $w:=w_{(a,T)}$ satisfies 
\begin{equation}\label{eqw}
    w_s=\Delta w-\tfrac{1}{2}y\cdot\nabla w-\tfrac{1}{p-1}w+|w|^{p-1}w=:F(w),\quad \text{ in }D_{a,T,\Omega}
\end{equation}
In particular, $u$ is self-similar about $(a,T)$ if and only if $w$ is independent of $s$, i.e. $w$ satisfies
\begin{equation}\label{eq1}
    \Delta w-\tfrac{1}{2}y\cdot\nabla w-\tfrac{1}{p-1}w+|w|^{p-1}w=0,
\end{equation}
with $(y,s)\in D_{a,T,\Omega}$. 

The first goal of this paper is to classify the self-similar solutions of (\ref{eq0}) with positive speed (i.e. $u_t(x,0)> 0$ for $x\in\mathbb R^n$) when $\Omega=\mathbb{R}^n$. 
\begin{theorem}\label{main 0}
    Suppose that $u$ is a smooth self-similar solution of (\ref{eq0}) on $\mathbb{R}^n\times(0,T)$ about $(a,T)$ with $p>1$, satisfying $u_t(x,0)>0$ for $x\in\mathbb R^n$, and one of the following conditions

    (1) $\int_{\mathbb{R}^n}|u(x,t)|^{2p}(T-t)^{\frac{2p}{p-1}}e^{-\frac{|x-a|^2}{4(T-t)}}dx<\infty, \quad\forall t\in (0,T)$;

    (2) $p>1+\sqrt{\tfrac{4}{3}}$;
    
holds. Then $u(x,t)=\kappa(T-t)^{-\frac{1}{p-1}}$, where $\kappa:=(\tfrac{1}{p-1})^{\frac{1}{p-1}}$.   
\end{theorem}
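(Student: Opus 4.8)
\medskip
\noindent\textbf{Proof strategy.} I would work entirely in the similarity variable $w=w_{(a,T)}$, which by self-similarity is independent of $s$ and solves the elliptic equation (\ref{eq1}). Write $\rho(y):=e^{-|y|^2/4}$, so that $\Delta\psi-\tfrac12 y\cdot\nabla\psi=\rho^{-1}\dv(\rho\nabla\psi)$ for any $\psi$ and this operator is symmetric on $L^2(\rho\,dy)$. The first step is to reformulate the speed condition and deduce $w>0$. Differentiating (\ref{wat}) in $t$ gives $u_t(x,t)=(T-t)^{-p/(p-1)}v(y)$ with
\[
v:=\tfrac1{p-1}\,w+\tfrac12\,y\cdot\nabla w ,
\]
so the hypothesis $u_t(\cdot,0)>0$ is exactly $v>0$ on $\mathbb R^n$. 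Since $\tfrac{d}{dr}\bigl(r^{2/(p-1)}w(r\omega)\bigr)=2\,r^{2/(p-1)-1}\,v(r\omega)$ for every unit vector $\omega$, the function $r\mapsto r^{2/(p-1)}w(r\omega)$ is strictly increasing on $(0,\infty)$ and tends to $0$ as $r\to0^+$; hence $w>0$ for $y\neq0$, and $v(0)=\tfrac1{p-1}w(0)>0$ forces $w(0)>0$. Thus $w>0$ on all of $\mathbb R^n$.

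Next I would identify the equation satisfied by $v$. Because $u>0$, the derivative $u_t$ solves $(u_t)_t=\Delta(u_t)+p\,u^{p-1}u_t$; transcribing this into similarity variables with the homogeneity $p/(p-1)$ appropriate to $u_t$, and using self-similarity, shows that $v$ solves
\[
L_0 v=v,\qquad L_0\phi:=\rho^{-1}\dv(\rho\nabla\phi)+\Bigl(p\,w^{p-1}-\tfrac1{p-1}\Bigr)\phi .
\]
Meanwhile (\ref{eq1}) reads $\rho^{-1}\dv(\rho\nabla w)=\tfrac1{p-1}w-w^p$, so $L_0 w=(p-1)w^p$. Setting $h:=v/w>0$ and combining these two identities algebraically (no integration by parts) yields
\[
\dv\bigl(w^2\rho\,\nabla h\bigr)=(p-1)\,w^2\rho\,\bigl(\kappa^{p-1}-w^{p-1}\bigr)\,h ,
\]
i.e.\ the Schr\"odinger-type operator $\phi\mapsto-\dv(w^2\rho\nabla\phi)+(p-1)w^2\rho(\kappa^{p-1}-w^{p-1})\phi$ admits the positive solution $h$. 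By the ground-state substitution (Allegretto--Piepenbrink principle) its quadratic form $\int w^2\rho|\nabla\phi|^2+(p-1)\int w^2\rho(\kappa^{p-1}-w^{p-1})\phi^2$ is nonnegative; evaluating it on $\phi\equiv1$ (via cutoff approximation, legitimate once $\int w^2\rho<\infty$ and $\int w^{p+1}\rho<\infty$) gives
\[
(p-1)\int_{\mathbb R^n}w^{p+1}\rho\,dy\;\le\;\int_{\mathbb R^n}w^2\rho\,dy .
\]

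To finish, multiply (\ref{eq1}) by $w\rho$ and integrate by parts over $\mathbb R^n$; the first-order terms combine into $\int w\,\dv(\rho\nabla w)=-\int\rho|\nabla w|^2$ and the Gaussian weight kills the flux at infinity, producing the Pohozaev-type identity
\[
(p-1)\int_{\mathbb R^n}w^{p+1}\rho\,dy\;=\;\int_{\mathbb R^n}w^2\rho\,dy\;+\;(p-1)\int_{\mathbb R^n}|\nabla w|^2\rho\,dy .
\]
Comparing with the inequality above forces $\int|\nabla w|^2\rho\,dy\le0$, so $w$ is constant; then (\ref{eq1}) gives $w\equiv\kappa$, i.e.\ $u(x,t)=\kappa(T-t)^{-1/(p-1)}$.

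The main obstacle is not this algebra but the decay and integrability needed to run it: one must control $w$, $\nabla w$, $v$ and $h=v/w$ so that $\int w^2\rho$, $\int w^{p+1}\rho$, $\int|\nabla w|^2\rho$ are finite, so that the two integrations by parts carry no boundary contribution, and so that the cutoff limit in the ground-state step goes through. Hypothesis (1) is precisely $w\in L^{2p}(\rho\,dy)$, which by H\"older bounds $\int w^2\rho$ and $\int w^{p+1}\rho$ and, combined with interior elliptic estimates for (\ref{eq1}), yields the remaining quantities and the requisite pointwise decay of $w$ and $\nabla w$. Under hypothesis (2) the additional task is to prove that a smooth self-similar solution automatically satisfies the integral bound in (1); I expect this to follow from a weighted energy estimate obtained by testing (\ref{eq1}) against $w^{2p-1}\rho$, with the numerical threshold $p>1+\sqrt{4/3}$ being exactly the condition that makes the unfavourable term in that estimate absorbable.
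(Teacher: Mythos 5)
Your treatment of case (1) is correct and takes a genuinely different route from the paper. The paper derives a ``stability inequality'' from $\log H$ (with $H=\tfrac{1}{p-1}w+\tfrac12 y\cdot\nabla w$), tests it with $\phi=\eta|w|^m$, combines it with a Caccioppoli-type identity for $\mathcal L|w|^{2m}$, and closes by an equality-case analysis in a Cauchy--Schwarz inequality forcing $\nabla w\equiv 0$. You instead use the sharper ground-state transform with $h=v/w$ (your ray argument giving $w>0$, hence making $h$ well defined, is a nice simplification the paper does not exploit), test the resulting nonnegative form at $\phi\equiv 1$, and compare with the energy identity obtained by pairing \eqref{eq1} with $w\rho$; this avoids the Cauchy--Schwarz loss and the equality-case discussion entirely. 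The one ingredient you gloss over is $\int|\nabla w|^2\rho\,dy<\infty$: ``interior elliptic estimates'' is not quite the right tool, but a weighted Caccioppoli estimate (pair \eqref{eq1} with $\eta_R^2 w\rho$ and absorb the cross term) yields it from $\int w^{p+1}\rho<\infty$ and $\int w^2\rho<\infty$, so this gap is easily filled.

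Case (2) is where your plan has a genuine gap. Testing \eqref{eq1} against $\eta_R^2 w^{2p-1}\rho$ produces, after Young's inequality on the cross term, an error of the form $\int|\nabla\eta_R|^2 w^{2p}\rho$ supported on the annulus $B_{R+1}\setminus B_R$ --- exactly the quantity you are trying to prove finite --- so it cannot be absorbed and the estimate does not close. More fundamentally, the equation alone carries no global weighted integrability; the only a priori source of such information is the positivity of $H$ (equivalently of $h$). The paper seeds the integrability precisely from the ground-state inequality itself: taking $\phi\equiv 1$ in its Lemma \ref{phi f test} gives $\int |w|^{p-1}\rho<\infty$, and it then bootstraps via the stability inequality tested with $\phi=\eta|w|^m$, where admissible exponents must satisfy $m>\tfrac12$ and $m^2<p(2m-1)$. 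The threshold $p>1+\sqrt{4/3}$ is exactly the condition that $m=\tfrac{p-1}{2}$ is admissible (i.e.\ $p>2$ and $3p^2-6p-1>0$), not an absorbability condition in a direct energy estimate. Note also that under (2) the paper never reaches $\int|w|^{2p}\rho<\infty$; it only obtains $\int|w|^{2(p-1)}\rho<\infty$ and runs the final step with $m=\tfrac{p-1}{2}$ rather than $m=p$, so your stated plan of reducing (2) to (1) would require more than what the paper's machinery delivers.
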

It's easy to see that (see Section \ref{pre} for details), the positivity of initial speed is equivalent to
\begin{equation}\label{eq posi}
    \tfrac{1}{p-1}w+\tfrac{1}{2}y_iw_i>0
\end{equation}
Thus, it suffices to prove
\begin{theorem}\label{main theo}
Suppose $w$ is a smooth solution of (\ref{eq1}) on $\mathbb{R}^n$ satisfying \eqref{eq posi}, and one of the following condition is satisfied

    (1) $\int_{\mathbb{R}^n}|w|^{2p}e^{-\frac{|y|^2}{4}}dy<\infty$; 
    
    (2) $p>1+\sqrt{\tfrac{4}{3}}$. 
    
Then $w$ is a constant, i.e. $w\equiv\kappa:=(\tfrac{1}{p-1})^{\frac{1}{p-1}}$. 
\end{theorem}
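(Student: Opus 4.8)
The plan is to manufacture, out of the positive-speed hypothesis, a \emph{positive} solution of the linearized equation and then run a ground-state (Allegretto--Piepenbrink) argument. Throughout write $\rho(y)=e^{-|y|^2/4}$ and $v:=\tfrac1{p-1}w+\tfrac12 y\cdot\nabla w$, so that \eqref{eq posi} says exactly $v>0$ on $\mathbb{R}^n$. First I would record that in fact $w>0$ everywhere: along each ray $r\mapsto r\theta$, $|\theta|=1$, one computes $\frac{d}{dr}\!\left(r^{2/(p-1)}w(r\theta)\right)=\frac2r\,r^{2/(p-1)}\,v(r\theta)>0$, and since $w$ is bounded near the origin, $r^{2/(p-1)}w(r\theta)\to0$ as $r\to0^+$; hence $w(r\theta)>0$ for $r>0$, while $v(0)=\tfrac1{p-1}w(0)>0$ gives $w(0)>0$. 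Next I would differentiate \eqref{eq1} in each variable and assemble the combination defining $v$; a direct computation then shows that $v$ solves the linear equation
\[
\Delta v-\tfrac12 y\cdot\nabla v-\tfrac{p}{p-1}v+p\,w^{p-1}v=0 \qquad\text{on }\mathbb{R}^n ,
\]
which is just the similarity form of the linearization $z_t=\Delta z+p|u|^{p-1}z$ of \eqref{eq0} satisfied by $z=u_t$.

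Since $v>0$ is smooth, substituting $\phi=v\psi$ (so $\psi=\phi/v\in C_c^\infty$) and integrating by parts against $\rho$ gives, for every $\phi\in C_c^\infty(\mathbb{R}^n)$,
\[
\int_{\mathbb{R}^n}|\nabla\phi|^2\rho+\tfrac{p}{p-1}\int_{\mathbb{R}^n}\phi^2\rho-p\int_{\mathbb{R}^n}w^{p-1}\phi^2\rho=\int_{\mathbb{R}^n}v^2\bigl|\nabla(\phi/v)\bigr|^2\rho\ \ge\ 0 .
\]
I would then put $\phi=w\eta$ with $\eta\in C_c^\infty(\mathbb{R}^n)$, expand $|\nabla(w\eta)|^2$, and eliminate the term $p\int w^{p+1}\eta^2\rho$ using the identity obtained by testing \eqref{eq1} against $w\eta^2\rho$ (namely $\int w^{p+1}\eta^2\rho=\int\eta^2|\nabla w|^2\rho+2\int\eta w\,\nabla w\cdot\nabla\eta\,\rho+\tfrac1{p-1}\int w^2\eta^2\rho$ after integration by parts). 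The zeroth-order terms cancel, and one is left with $(p-1)\int\eta^2|\nabla w|^2\rho+2(p-1)\int\eta w\,\nabla w\cdot\nabla\eta\,\rho\le\int w^2|\nabla\eta|^2\rho$; absorbing the cross term by Young's inequality yields the weighted Caccioppoli estimate
\[
\int_{\mathbb{R}^n}\eta^2|\nabla w|^2\rho\ \le\ \tfrac{2(2p-1)}{p-1}\int_{\mathbb{R}^n}w^2|\nabla\eta|^2\rho ,
\]
valid for \emph{every} $\eta\in C_c^\infty(\mathbb{R}^n)$, with no a priori integrability assumed on $w$.

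With $\eta=\eta_R$ (equal to $1$ on $B_R$, supported in $B_{2R}$, $|\nabla\eta_R|\le2/R$) this reads $\int_{B_R}|\nabla w|^2\rho\le\tfrac{2(2p-1)}{p-1}R^{-2}\int_{B_{2R}\setminus B_R}w^2\rho$. Under hypothesis (1) I would conclude at once: by H\"older, $\int_{B_{2R}\setminus B_R}w^2\rho\le(4\pi)^{n(p-1)/(2p)}\bigl(\int_{B_{2R}\setminus B_R}w^{2p}\rho\bigr)^{1/p}$, which is bounded in $R$ (indeed it tends to $0$), so letting $R\to\infty$ forces $\int_{\mathbb{R}^n}|\nabla w|^2\rho=0$. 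Under hypothesis (2) I must instead show $R^{-2}\int_{B_{2R}\setminus B_R}w^2\rho\to0$, for which it suffices to prove an a priori growth bound on $w$ — e.g.\ $|w(y)|\le C(1+|y|)^{2/(p-1)}$, or even just $\int_{B_R}w^2\rho=o(R^2)$. This is where the restriction $p>1+\sqrt{4/3}$ should come in: combining \eqref{eq1}, interior elliptic/Harnack-type estimates (whose constants degenerate in $|y|$ because of the drift term $\tfrac12 y\cdot\nabla w$ and of the zeroth-order coefficient $w^{p-1}$), and once more the positivity $v>0$, one expects a self-improving bound on $\sup_{B_R}w$ that can be closed only when $p$ is large enough. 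I expect this a priori growth estimate to be the main obstacle; the rest of the argument is soft.

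In either case $\nabla w\equiv0$, so $w$ is a constant; \eqref{eq1} then gives $w\bigl(w^{p-1}-\tfrac1{p-1}\bigr)=0$, i.e.\ $w\equiv0$ or $w\equiv\kappa$ (the value $-\kappa$ being excluded since $w>0$), and \eqref{eq posi} becomes $\tfrac1{p-1}w>0$, which rules out $w\equiv0$. Hence $w\equiv\kappa$, as claimed.
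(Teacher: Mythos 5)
Your treatment of hypothesis (1) is correct and complete, and it takes a genuinely leaner route than the paper. The preliminary observations are right: integrating $v>0$ along rays does give $w>0$ everywhere, and $v$ does satisfy $\mathcal Lv-\tfrac{p}{p-1}v+pw^{p-1}v=0$ (this is Lemma \ref{lemma eig t sp} of the paper, i.e.\ $Lv=v$). The ground-state substitution $\phi=v\psi$ yields the nonnegativity of the quadratic form exactly as you state (the paper's Lemma \ref{phi f test} is the same inequality with cruder constants), and your choice $\phi=w\eta$, combined with testing \eqref{eq1} against $w\eta^2 e^{-|y|^2/4}$, correctly cancels the zeroth-order and $w^{p+1}$ terms to produce the weighted Caccioppoli bound $\int\eta^2|\nabla w|^2\rho\le \tfrac{2(2p-1)}{p-1}\int w^2|\nabla\eta|^2\rho$ with no integrability assumed. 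Under (1) the tail $\int_{B_{2R}\setminus B_R}w^2\rho$ is $o(1)$ by H\"older, so $\nabla w\equiv 0$ and $w\equiv\kappa$. This is essentially the paper's scheme specialized to the exponent $m=1$ (the paper instead runs it at $m=p$ and needs two propositions to reach the same dichotomy), and for case (1) your version is shorter and cleaner.

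For hypothesis (2), however, there is a genuine gap, which you yourself flag: your Caccioppoli inequality needs $\int_{B_{2R}\setminus B_R}w^2 e^{-|y|^2/4}dy=o(R^2)$, and you have no proof of any such growth bound; moreover your guess that it should come from Harnack-type sup bounds degenerating in $|y|$ is not the mechanism that makes $p>1+\sqrt{4/3}$ appear. The paper closes this case purely on the integral side: since constants lie in the weighted $W^{1,2}$ space (the Gaussian weight is integrable), applying the ground-state inequality with $\phi$ a cutoff approximating $1$ already gives $\int w^{p-1}e^{-|y|^2/4}dy<\infty$ with no hypothesis on $w$. One then runs a Moser-type bootstrap at the exponent $m=\tfrac{p-1}{2}$: testing $\mathcal L|w|^{2m}$ against $\eta^2$ and combining with the stability inequality for $\phi=\eta|w|^m$ upgrades $[|w|^{2m}]_W<\infty$ to $[|w|^{2m+p-1}+|\nabla|w|^m|^2]_W<\infty$, provided $m^2<p(2m-1)$ and $m>\tfrac12$; for $m=\tfrac{p-1}{2}$ these conditions are exactly $3p^2-6p-1>0$ and $p>2$, i.e.\ $p>1+\sqrt{4/3}$. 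That is where the exponent restriction enters --- as an algebraic condition in the weighted integral iteration, not as a pointwise a priori estimate. Note also that even granting $\int w^{p-1}\rho<\infty$, your route does not close for $1+\sqrt{4/3}<p<3$ (where $p-1<2$, so $\int w^2\rho$ is not controlled by $\int w^{p-1}\rho$), so some version of the paper's bootstrap is genuinely needed rather than a technical convenience.
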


The study of equation (\ref{eq1}) plays an important role in the blowup analysis of solutions of (\ref{eq0}) and has attracted much attention in the past. 
Before recalling the known results, we first introduce several critical exponents:
$$
\text{ (Sobolev exponent) }\quad p_S:=\left\{
\begin{aligned}
   &+\infty, \quad n=1,2;\\
   &\tfrac{n+2}{n-2},\quad n\geq 3.
\end{aligned}
\right.
$$
$$
\text{ (Joseph-Lundgren exponent) }\quad p_{JL}:=\left\{
\begin{aligned}
   &+\infty, \quad n\leq 10;\\
   &1+4\tfrac{n-4+2\sqrt{n-1}}{(n-2)(n-10)},\quad n\geq 11.
\end{aligned}
\right.
$$
$$
\text{ (Lepin exponent) }\quad p_L:=\left\{
\begin{aligned}
   &+\infty, \quad n\leq 10;\\
   &1+\tfrac{6}{n-10},\quad n\geq 11.
\end{aligned}
\right.
$$
For $n=1,2$, $p>1$ or $n\geq 3$, $p\leq p_S$, Giga-Kohn showed that the only bounded solution of (\ref{eq1}) is $w=0,\pm\kappa$ in their landmark paper \cite{gk85 asy sim}. For $p>p_S$, most known results are about positive radial solutions. i.e. solutions of 
\begin{equation}\label{eq rad}
\begin{aligned}
    w_{rr}+(\tfrac{n-1}{r}-\tfrac{r}{2})w_r-\tfrac{w}{p-1}+w^p=0,\quad r>0;\\
    w_r(0)=0,\quad w>0.
\end{aligned}
\end{equation}
For $p<p_L$, it's proved in a series of authors in \cite{, qui20, bq,fp,gv,l,ns,t} that (\ref{eq rad}) has countable solutions for $p_S<p<p_{JL}$, at most countable solutions for $p=p_{JL}$, and finite for $p_{JL}<p<p_L$. For the case $p>p_L$, Mizoguchi \cite{mi} proved that (\ref{eq rad}) only has constant solution $\kappa$, the same result was claimed by Mizoguchi in \cite{mi2} for $p=p_L$, but the proof there seems not complete, see Pol\'a\v cik-Quittner \cite{qui20}.

As seen above, most of the previous classification of the self-similar solutions need either $w$ to be radial symmetric or the exponent $p$ to be subcritical. Our Theorem \ref{main 0} replaces these conditions with the positivity condition \eqref{eq posi} together with a mild integral condition 
$\int_{\mathbb{R}^n}|w|^{2p}e^{-\frac{|y|^2}{4}}dy<\infty$ (and this additional condition can be removed in the case $p>1+\sqrt{\tfrac{4}{3}}$). The idea originates from Colding-Minicozzi's classification of linearly stable self-shrinkers of mean curvature flow with polynomial volume growth in  \cite{cm12}. Due to the similarity of the mean curvature flow equation and equation (\ref{eq0}), this is reasonable. In fact, this relation was also exploited by Wang-Wei-Wu \cite{www} to study the $F-$stability of \eqref{eq0}, where they showed that the only bounded solutions of \eqref{eq1} satisfying \eqref{eq posi} is $w\equiv\kappa$ for $n\geq 3,p\geq P_S$ (see Proposition 5.1 of \cite{www}). Moreover, Wang-Wang-Wei \cite{www25} proved a parabolic Liouville theorem for ancient solutions of \eqref{eq0} in the supercritical case.

As indicated above, the classification of self-similar solution plays an important role in the study of blow up behaviour of (\ref{eq0}). A smooth solution $u$ of (\ref{eq0}) is said to blow up at time $T$ if 
\begin{equation*}
    \lim_{t\to T}\|u(\cdot,t)\|_{L^{\infty}(\Omega)}=\infty.
\end{equation*}
It's convenient to divide the blow-up into two types: type-I blow-up and type-II blow-up. The blow-up is said to be type-I if 
\begin{equation*}
    \limsup_{t\to T}\|u(\cdot,t)\|_{L^{\infty}}(T-t)^{\frac{1}{p-1}}<\infty.
\end{equation*}
Otherwise, it is said to be type-II. Suppose that $u$ has type-I blow up at $T$, and $(a,T)\in\Omega\times\mathbb{R}_+$ is a blow up point of $u$ (i.e. there exists a sequence $(x_i,t_i)\in\Omega\times(0,T)$, such that $(x_i,t_i)\to(a,T)$, and $|u(x_i,t_i)|\to+\infty$ as $i\to\infty$). 
Giga-Kohn and Giga-Matsui-Sasayama \cite{gk85 asy sim, gk85 sim var, gss04 R, gss04 c} proved that when $1<p<p_S$ and $\Omega=\mathbb{R}^n$ or $\Omega$ is convex domain, then only type-I blow up happens and the blow-up are all asymptotically self-similar in this case. For $p\geq p_S$, type-II blow-up do exists under certain conditions. Good surveys in this direction are Quittner-Souplet, \cite{QS book}, and Wang-Zhang-Zhang \cite{WZZ}. On the other hand. Friedman-McLeod \cite{fm} proved that when $\Omega$ is a bounded convex domain, $u_0\geq 0$, $u=0$ on $\partial\Omega$, and $u_t\geq 0$, then only type-I blow-up appears. Here, we prove the following theorem.
\begin{theorem}\label{cor}
   Suppose $\Omega\subset\mathbb{R}^n$ is a bounded convex domain with smooth boundary with $p>1$. Given a smooth function $\varphi(x)\geq 0$ in $\Omega$, suppose the initial boundary value problem
    \begin{equation}\label{ibv}
        \left\{
\begin{aligned}
   &u_t=\Delta u+|u|^{p-1}u\quad \text{ in }\Omega\times (0,T),\\
   &u(x,0)=\varphi(x),\\
   &u(x,t)=0,\quad x\in\partial\Omega,0<t<T,
\end{aligned}
\right.
\end{equation}
has a smooth solution $u$ on $\Omega\times(0,T)$ which blows up in finite time at $(a,T)\in \Omega\times\mathbb{R}_+$, 
and $u_t(x,0)\geq 0$ for $x\in\Omega$. Then $u$ is asymptotically self-similar to $\frac{\kappa}{(T-t)^{\frac{1}{p-1}}}$ as $t\to T$. Equivalently, $w(y,s)=w_{a,T}(y,s)$ converges to $\kappa$ in $C^{\infty}_{loc}(\mathbb{R}^n)$ as $s\to\infty$, where $w_{a,T}$ is defined in \eqref{wat}.   
\end{theorem}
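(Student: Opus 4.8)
The plan is to reduce Theorem~\ref{cor} to Theorem~\ref{main theo} via the Giga--Kohn theory of type-I blow-up, the only genuinely new point being the passage from the one-sided condition $u_t\ge 0$ to the \emph{strict} positivity \eqref{eq posi} for the blow-up profile. First I would fix the signs and the blow-up type. Since $f(u)=|u|^{p-1}u$ is $C^1$ for $p>1$, the comparison principle applies to \eqref{ibv}, and $\varphi\ge0$ together with $u|_{\partial\Omega}=0$ forces $u\ge 0$. Differentiating the equation in \eqref{ibv} in $t$, the function $v:=u_t$ solves the linear parabolic equation $v_t=\Delta v+p|u|^{p-1}v$ in $\Omega\times(0,T)$, with $v=0$ on $\partial\Omega\times(0,T)$ (because $u\equiv0$ there) and $v(\cdot,0)=u_t(\cdot,0)\ge0$; on each slab $\Omega\times[0,T-\varepsilon]$ the coefficient $p|u|^{p-1}$ is bounded, so the parabolic maximum principle yields $u_t\ge0$ throughout, and the strong maximum principle (as $u_t\not\equiv0$, $u$ blowing up) yields $u_t>0$ in $\Omega\times(0,T)$. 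With $u_0\ge0$, $u|_{\partial\Omega}=0$, $u_t\ge0$ and $\Omega$ a bounded convex domain, the theorem of Friedman--McLeod \cite{fm} applies: the blow-up at $T$ is of type I.

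Next I would run the similarity-variable analysis at $(a,T)$. As $a\in\Omega$ is interior, $D_{a,T,\Omega}$ contains $B_R\times\{s\ge s_R\}$ for each $R$ and $s_R$ large, so $w:=w_{a,T}$ is eventually defined on every ball. Type-I blow-up gives $\|w(\cdot,s)\|_{L^\infty}\le C$, hence uniform $C^k_{\mathrm{loc}}(\mathbb R^n)$ bounds by interior parabolic estimates, so every $s_j\to\infty$ admits a subsequence along which $w(\cdot,s_j)\to w_\infty$ in $C^\infty_{\mathrm{loc}}$ with $\|w_\infty\|_{L^\infty}\le C$. The Lyapunov functional
\[
E[w]=\int_{\mathbb R^n}\Big(\tfrac12|\nabla w|^2+\tfrac1{2(p-1)}w^2-\tfrac1{p+1}|w|^{p+1}\Big)e^{-|y|^2/4}\,dy
\]
is bounded and satisfies $\tfrac{d}{ds}E[w(\cdot,s)]=-\int_{\mathbb R^n}w_s^2\,e^{-|y|^2/4}\,dy$; consequently $w_s(\cdot,s)\to 0$ in $L^2(e^{-|y|^2/4}dy)$ and, by parabolic estimates, in $C^0_{\mathrm{loc}}$, every $w_\infty$ in the $\omega$-limit set $\omega(a)$ is a smooth bounded solution of \eqref{eq1} on $\mathbb R^n$, and $E$ is constant on $\omega(a)$. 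Moreover, since $(a,T)$ is a type-I blow-up point, nondegeneracy of blow-up (Giga--Kohn \cite{gk85 asy sim,gk85 sim var}; see also \cite{QS book}) gives $0\notin\omega(a)$.

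Finally I would upgrade the positivity and conclude. From \eqref{wat} one has the identity $u_t(x,t)=(T-t)^{-\frac{p}{p-1}}\big(\tfrac1{p-1}w+\tfrac12 y\cdot\nabla w+w_s\big)$, so $u_t\ge0$ is equivalent to $\tfrac1{p-1}w(\cdot,s)+\tfrac12 y\cdot\nabla w(\cdot,s)+w_s(\cdot,s)\ge0$ for every $s$. Fixing $w_\infty\in\omega(a)$ and $s_j\to\infty$ with $w(\cdot,s_j)\to w_\infty$ in $C^\infty_{\mathrm{loc}}$, and using $w_s(\cdot,s_j)\to0$ in $C^0_{\mathrm{loc}}$, the limit $J_\infty:=\tfrac1{p-1}w_\infty+\tfrac12 y\cdot\nabla w_\infty$ satisfies $J_\infty\ge0$ on $\mathbb R^n$. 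Since $J_\infty=(T-t)^{p/(p-1)}\partial_t u$ for the self-similar solution with profile $w_\infty$, and $\partial_t u$ solves $\partial_t(\partial_t u)=\Delta(\partial_t u)+p|u|^{p-1}\partial_t u$, passing to similarity variables shows
\[
\Delta J_\infty-\tfrac12 y\cdot\nabla J_\infty+\Big(p|w_\infty|^{p-1}-\tfrac{p}{p-1}\Big)J_\infty=0 .
\]
As the zeroth-order coefficient is bounded and $J_\infty\ge0$, the strong maximum principle gives either $J_\infty>0$ on all of $\mathbb R^n$ or $J_\infty\equiv0$. In the latter case $\tfrac2{p-1}w_\infty+y\cdot\nabla w_\infty\equiv0$, i.e.\ $w_\infty$ is homogeneous of degree $-\tfrac2{p-1}<0$; continuity at the origin then forces $w_\infty\equiv0$, contradicting $0\notin\omega(a)$. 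Hence $J_\infty>0$, i.e.\ $w_\infty$ satisfies \eqref{eq posi}, and since $w_\infty$ is bounded it also satisfies condition~(1) of Theorem~\ref{main theo}; therefore $w_\infty\equiv\kappa$. As $w_\infty\in\omega(a)$ was arbitrary, $\omega(a)=\{\kappa\}$, which means $w_{a,T}(\cdot,s)\to\kappa$ in $C^\infty_{\mathrm{loc}}(\mathbb R^n)$ as $s\to\infty$, the asserted asymptotic self-similarity.

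The main obstacle is exactly this last upgrade: $u_t\ge0$ yields only the non-strict bound $J_\infty\ge0$ in the limit, whereas Theorem~\ref{main theo} requires the strict \eqref{eq posi}. Bridging the gap needs the linear elliptic equation for $J_\infty$, the strong maximum principle, and the elimination of the degenerate alternative $J_\infty\equiv0$ through the homogeneity argument together with the Giga--Kohn nondegeneracy of type-I blow-up; the remaining soft-analysis ingredients (uniform bounds, $w_s\to0$, and the structure of $\omega(a)$) are classical for type-I blow-up and need only be quoted carefully.
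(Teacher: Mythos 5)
Your proposal is correct and follows essentially the same route as the paper: Friedman--McLeod for the type-I bound, similarity variables plus the Giga--Kohn energy to extract $s$-independent subsequential limits, passage of $u_t\ge 0$ to $\hat H\ge 0$ in the limit, a strong-maximum-principle/Harnack dichotomy on the eigenfunction equation $L\hat H=\hat H$, and then Theorem \ref{main theo} in the case $\hat H>0$. The only cosmetic difference is in the degenerate case $\hat H\equiv 0$: you use Euler's homogeneity relation $y\cdot\nabla w_\infty=-\tfrac{2}{p-1}w_\infty$ and boundedness at the origin, whereas the paper applies the Harnack inequality to $\Delta\hat w+|\hat w|^{p-1}\hat w=0$ with $\hat w(0)=0$; both correctly force $w_\infty\equiv 0$, which is then excluded by Giga's nondegeneracy of blow-up points.
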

Note that Bebernes-Eberly \cite{be} got similar results when $n\geq 3$ and $p\geq\tfrac{n}{n-2}$ for the radially-symmetric case. That is, $\Omega=\{x\in\mathbb{R}^n||x|<R\}$ being a ball centered at the origin, $u_t\geq 0$, $\varphi\geq 0$ are radial symmetric (see also Galaktionov-Posashkov \cite{gp}, Giga-Kohn \cite{gk85 asy sim} for $n=1,2$). We removed the assumption of the radial symmetry of $\Omega$ and $u$.

The rest of the paper is organized as follows. In Section \ref{pre}, we define the self-similar solutions with positive speed and  derive some basic facts about the linearized operator of $F$ defiend in \eqref{eqw}. In Section \ref{int es}, we first derive some formulas for integration by parts in a weighted space on a noncompact domain. Then we derive the integral estimates which will conclude the proof of Theorem \ref{main theo} and Theorem \ref{cor} in Section \ref{the}. In the last section, we define the linear stability of self-similar solutions, and discuss its relation with positive speed.

{\bf Acknowledgement}  
K. Choi is supported by KIAS Individual Grant MG078902, J. Huang is supported by KIAS Individual Grant MG088501.

\section{Self-similar solutions of positive speed}\label{pre}

In this section, we assume that $u$ is a smooth self-similar solution of (\ref{eq0}) w.r.t. $(a,T)$ on $\mathbb{R}^n\times(0,T)$. 
Using notations in (\ref{wat}), $u$ is self-similar w.r.t. $(a,T)$ if and only if $w=w_{a,T}(y,s)$ is independent of $s$, i.e.
\begin{equation}
    0=w_s=\Delta w-\tfrac{1}{2}y\cdot\nabla w-\tfrac{1}{p-1}w+|w|^{p-1}w=:F(w).
\end{equation}
We first compute the eigenfunctions and eigenvalues of the linearization of $F$. The linearization $L_w$ of $F$ at $w$ is:
\begin{equation}\label{op Lw}
\begin{aligned}
   L_wv=\Delta v-\tfrac{1}{2}y\cdot\nabla v-\tfrac{v}{p-1}+p|w|^{p-1}v.
    \end{aligned}
\end{equation}
We write $L_w$ as $L$ for short in the rest of the paper if there is no confusion. A non-zero $C^2$ function $v$ is called an eigenfunction of $L$ with eigenvalue $\lambda$ if it satisfies
\begin{equation}
    Lv=\Delta v-\tfrac{1}{2}y\cdot\nabla v-\tfrac{v}{p-1}+p|w|^{p-1}v=-\lambda v.
\end{equation}

We compute the eigenfunctions of $L$ corresponding to re-centering of space and time for self-similar solutions, which will be used in the discussion of linear stability of $u$ in later sections. Note that $w_s=0$ for self-similar solutions. A direct computation from \eqref{wat} gives
\begin{equation}\label{ui ut}
\begin{aligned}
u_i=&(T-t)^{-\frac{1}{p-1}-\frac{1}{2}}w_i,\quad i=1,2,\cdots n;\\
    u_t=&\,\tfrac{1}{p-1}(T-t)^{-\frac{p}{p-1}}w+(T-t)^{-\frac{1}{p-1}}(\tfrac{1}{2}w_iy_i(T-t)^{-1}+w_s\tfrac{1}{T-t})\\
    =&\,(T-t)^{-\frac{p}{p-1}}(\tfrac{1}{p-1}w+\tfrac{1}{2}y_iw_i)
\end{aligned}
\end{equation}
Ignoring the multiple constants, this suggests that $w_i$ ($i=1,2,\cdots n$) and $\tfrac{1}{p-1}w+\tfrac{1}{2}y_iw_i$ are the eigenfunctions of $L$ which correspond to the re-centering of space and time variable respectively. In fact, we have the following lemma.
\begin{lemma}\label{lemma eig t sp}
 Suppose $w$ is smooth and satisfies (\ref{eq1}) on $\mathbb{R}^n$. Then 
 \begin{equation}
     \begin{aligned}
         L(\tfrac{1}{p-1}w+\tfrac{1}{2}\sum_{i=1}^ny_iw_i)=\tfrac{1}{p-1}w+\tfrac{1}{2}y_iw_i;\\
         Lw_i=\frac{1}{2}w_i,\quad(i=1,2\cdots,n).
     \end{aligned}
 \end{equation}
 In particular, if $\tfrac{1}{p-1}w+\tfrac{1}{2}\sum_{i=1}^ny_iw_i\not\equiv 0$, it is an eigenfunction of $L$ with eigenvalue $-1$; if $w_i\not\equiv0$, it is an eigenfunction of $L$ with eigenvalue $-\frac{1}{2}$. 
\end{lemma}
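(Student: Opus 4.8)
The plan is to prove both identities by differentiating the stationary equation \eqref{eq1} and then substituting \eqref{eq1} back in to remove the Laplacian terms that appear; the eigenfunction/eigenvalue statements are then immediate from the definition of an eigenfunction of $L$ in \eqref{op Lw}.

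For $Lw_i=\tfrac12 w_i$: differentiate \eqref{eq1} in the direction $y_i$. Since $\partial_i\bigl(\sum_j y_j w_j\bigr)=w_i+\sum_j y_j\partial_j w_i=w_i+y\cdot\nabla w_i$ and $\partial_i\bigl(|w|^{p-1}w\bigr)=p|w|^{p-1}w_i$ (the map $t\mapsto|t|^{p-1}t$ is $C^1$ for $p>1$, and $w$ is smooth so there is no issue at zeros of $w$), differentiation produces exactly
\[
\Delta w_i-\tfrac12\,y\cdot\nabla w_i-\tfrac1{p-1}w_i+p|w|^{p-1}w_i=\tfrac12 w_i,
\]
which is $Lw_i=\tfrac12 w_i$.

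For $L\bigl(\tfrac1{p-1}w+\tfrac12 y\cdot\nabla w\bigr)=\tfrac1{p-1}w+\tfrac12 y\cdot\nabla w$, I would treat the two summands separately by linearity. Using \eqref{eq1} in the form $\Delta w-\tfrac12 y\cdot\nabla w-\tfrac1{p-1}w=-|w|^{p-1}w$ gives $Lw=(p-1)|w|^{p-1}w$, hence $L\bigl(\tfrac1{p-1}w\bigr)=|w|^{p-1}w$. For the dilation term I would use the commutator identity $\Delta(y\cdot\nabla w)=y\cdot\nabla(\Delta w)+2\Delta w$ together with $y\cdot\nabla\bigl(|w|^{p-1}w\bigr)=p|w|^{p-1}(y\cdot\nabla w)$; feeding \eqref{eq1} (and its image under $y\cdot\nabla$) into these, one finds the clean relation $L(y\cdot\nabla w)=2\Delta w$, and then \eqref{eq1} once more gives $L\bigl(\tfrac12 y\cdot\nabla w\bigr)=\tfrac12 y\cdot\nabla w+\tfrac1{p-1}w-|w|^{p-1}w$. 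Adding the two contributions, the $|w|^{p-1}w$ terms cancel and $Lg=g$ for $g=\tfrac1{p-1}w+\tfrac12 y\cdot\nabla w$, as claimed.

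This lemma is entirely computational, so there is no genuine obstacle; the only care needed is the commutator identity for the Euler field $y\cdot\nabla$ and careful bookkeeping of the nonlinear terms. Conceptually these eigenfunctions are not accidents: \eqref{ui ut} already exhibits $w_i$ and $g$ as the similarity-variable profiles of $u_{x_i}$ and $(T-t)^{p/(p-1)}u_t$, both of which solve the equation obtained by linearizing \eqref{eq0} about the self-similar solution $u$; passing that linearized parabolic equation to similarity variables turns it into $\phi_s=L\phi$, and the explicit growth factors $e^{s/2}$ and $e^{s}$ carried respectively by $u_{x_i}$ and $(T-t)^{p/(p-1)}u_t$ are precisely what force the eigenvalues $-\tfrac12$ and $-1$. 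I would present the direct computation as the proof and record this symmetry picture as a remark.
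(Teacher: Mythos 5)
Your proof is correct and follows essentially the same route as the paper: differentiate \eqref{eq1} to get $Lw_i=\tfrac12 w_i$, then treat $\tfrac1{p-1}w$ and $\tfrac12 y\cdot\nabla w$ separately and observe that the $|w|^{p-1}w$ contributions cancel upon adding. The only cosmetic difference is that you package the computation for the dilation term via the commutator identity $\Delta(y\cdot\nabla w)=y\cdot\nabla(\Delta w)+2\Delta w$, whereas the paper expands $\Delta(y_iw_i)-\tfrac12 y\cdot\nabla(y_iw_i)$ directly and substitutes the differentiated equation for $\Delta w_i$; both yield the same identity $L(y\cdot\nabla w)=2\Delta w$.
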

\begin{proof}
    Differentiating the equation (\ref{eq1}) with respect to $y_i$ gives 
\begin{equation}
    Lw_i=\tfrac{1}{2}w_i, \quad i=1,2,\cdots n.
\end{equation}

For the function $\tfrac{1}{p-1}w+\tfrac{1}{2}y_iw_i$, a direct computation using (\ref{eq1}) shows 
 \begin{align*}
    &\Delta(w_iy_i)-\tfrac{1}{2}y\cdot\nabla (w_iy_i)=y_i\Delta w_i+2\Delta w-\tfrac{1}{2}w_{ik}y_iy_k-\tfrac{1}{2}y\cdot\nabla w,\\
    =&\,y_i(\tfrac{1}{2}y_kw_{ik}+\tfrac{1}{p-1}w_i-p|w|^{p-1}w_i+\tfrac{1}{2}w_i)-\tfrac{1}{2}w_{ik}y_iy_k\\
    &+2(\tfrac{1}{2}y\cdot \nabla w+\tfrac{1}{p-1}w-|w|^{p-1}w)-\tfrac{1}{2}y\cdot\nabla w\\
    =&\,\tfrac{y\cdot\nabla  w}{p-1}+y\cdot\nabla w-p|w|^{p-1}w_iy_i+\tfrac{2}{p-1}w-2|w|^{p-1}w.
\end{align*}
This implies that
\begin{equation}\label{wiyi}
    L(w_iy_i)=y\cdot \nabla w+\tfrac{2}{p-1}w-2|w|^{p-1}w.
\end{equation}
On the other hand, 
\begin{equation}\label{w}
    L(\tfrac{2}{p-1}w)=\tfrac{2}{p-1}L_ww=\tfrac{2}{p-1}(-|w|^{p-1}w+p|w|^{p-1}w)=2|w|^{p-1}w.
\end{equation}
Combining (\ref{wiyi}) and (\ref{w}), we get
\begin{equation}
    L(w_iy_i+2\tfrac{1}{p-1}w)=w_iy_i+\tfrac{2}{p-1}w.
\end{equation}
\end{proof}
\begin{definition}
    A smooth self-similar solution $u(x,t)$ of \eqref{eq0} with $\Omega=\mathbb R^n$ is said to have positive speed if $u_t(x,t)>0$ for $(x,t)\in\mathbb R^n\times(0,T)$.
\end{definition}
\begin{corollary}\label{positivity of H}
     Suppose $u$ is a smooth self-similar solution of (\ref{eq0})  w.r.t. $(a,T)$ on $\mathbb{R}^n\times (0,T)$ and has positive speed. Then $\tfrac{1}{p-1}w+\tfrac{1}{2}y_iw_i>0$ is a positive eigenfunction of $L$ on $\mathbb R^{n}$ with eigenvalue $-1$.
\end{corollary}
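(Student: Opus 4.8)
The plan is to obtain the statement by combining the explicit formula for $u_t$ in similarity variables from \eqref{ui ut} with the eigenvalue identity from Lemma \ref{lemma eig t sp}; no new computation is needed.

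First I would use self-similarity: since $u$ is self-similar about $(a,T)$, the function $w=w_{a,T}$ does not depend on $s$, so the last line of \eqref{ui ut} reads
\[
u_t(x,t)=(T-t)^{-\frac{p}{p-1}}\Big(\tfrac{1}{p-1}w(y)+\tfrac{1}{2}y_iw_i(y)\Big),\qquad y=\tfrac{x-a}{\sqrt{T-t}}.
\]
For each fixed $t\in(0,T)$ the map $x\mapsto (x-a)/\sqrt{T-t}$ is a bijection of $\mathbb R^n$, and $(T-t)^{-p/(p-1)}>0$; hence the hypothesis $u_t>0$ on $\mathbb R^n\times(0,T)$ is equivalent to $\tfrac{1}{p-1}w+\tfrac12 y_iw_i>0$ everywhere on $\mathbb R^n$, which is \eqref{eq posi}.

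Next I would apply Lemma \ref{lemma eig t sp}, which gives $L\big(\tfrac{1}{p-1}w+\tfrac12\sum_i y_iw_i\big)=\tfrac{1}{p-1}w+\tfrac12 y_iw_i$, i.e.\ $Lv=-v$ for $v:=\tfrac{1}{p-1}w+\tfrac12 y_iw_i$. By the previous step $v>0$ on $\mathbb R^n$, so in particular $v\not\equiv 0$, and the lemma then guarantees that $v$ is an eigenfunction of $L$ with eigenvalue $-1$. This is exactly the assertion: $\tfrac{1}{p-1}w+\tfrac12 y_iw_i$ is a positive eigenfunction of $L$ on $\mathbb R^n$ with eigenvalue $-1$.

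There is essentially no obstacle here; the only point to state carefully is the change-of-variables bookkeeping in the first step, namely that strict positivity of $u_t$ on the whole parabolic cylinder transfers to strict positivity of $v$ on all of $\mathbb R^n$ rather than on a proper subset, which is immediate from the surjectivity of $x\mapsto (x-a)/\sqrt{T-t}$ for each fixed $t$. Everything else is a direct citation of \eqref{ui ut} and Lemma \ref{lemma eig t sp}.
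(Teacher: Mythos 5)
Your proposal is correct and follows exactly the paper's own argument: positivity of $\tfrac{1}{p-1}w+\tfrac12 y_iw_i$ is read off from the second equation in \eqref{ui ut}, and the eigenvalue $-1$ comes from Lemma \ref{lemma eig t sp}. The only difference is that you spell out the change-of-variables surjectivity, which the paper leaves implicit.
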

\begin{proof}
    This follows from the above lemma and note that $u_t>0$ implies $\tfrac{1}{p-1}w+\tfrac{1}{2}y_iw_i>0$ by the second equation in \eqref{ui ut}.
\end{proof}



\section{Integral estimates}\label{int es}
We assume that $u$ is a smooth self-similar solution of (\ref{eq0}) on $\mathbb{R}^n\times(0,T)$ (w. r. t. $(a,T)$) with positive speed in this section. To prove Theorem \ref{main theo} and Theorem \ref{cor} in section \ref{the}, we need some integral estimates, which will be derived in this section. The main tool is integration by parts in a weighted space on a noncompact domain. First we introduce some notations. 

We first introduce the Ornstein–Uhlenbeck operator 
\begin{equation}\label{eq op OU}
\mathcal{L}:=\Delta-\tfrac{1}{2}y\cdot\nabla.
\end{equation}
Then the linearized operator $L=L_w$ defined in \eqref{op Lw} can be written as
$$L=L_w
=\mathcal{L}-\tfrac{1}{p-1}+p|w|^{p-1}.$$
Then we introduce the weighed inner product 
\begin{equation}\label{eq we inn}
     \langle f,g\rangle_W:=\int_{\mathbb{R}^n}fge^{-\frac{|y|^2}{4}} dy,\quad f,g\in C^0(\mathbb R^n)
\end{equation}
and the notation
\begin{equation}
    [f]_W:=\int_{\mathbb{R}^n}fe^{-\frac{|y|^2}{4}}dy, \quad f\in C^0(\mathbb R^n).
\end{equation}
\begin{definition}
    A function $f\in C^{2}(\mathbb{R}^n)$ is said to in the weighted $W^{1,2}$ space if
\begin{equation}
    \int_{\mathbb{R}^n}(|f|^2+|\nabla f|^2)e^{-\frac{|y|^2}{4}}dy=[f^2+|\nabla f|^2]_W<\infty.
\end{equation}
\end{definition}

We now give some formula for integration by parts in the weighted $W^{1,2}$ space. The proof follows the corresponding results for mean curvature flow in section 3 of \cite{cm12}, we give here for completeness. First, we consider the formula for functions with compact support.
\begin{lemma}\label{int by part comp}
    If $f\in C^1(\mathbb{R}^n)$, $g\in C^2(\mathbb{R}^n)$ function, and at least one of $f,g$ has compact support. Then
    \begin{equation}
        \int_{\mathbb{R}^n}f\mathcal{L}ge^{-\frac{|y|^2}{4}}dy=-\int_{\mathbb{R}^n}\langle\nabla g,\nabla f\rangle e^{-\frac{|y|^2}{4}}dy.
    \end{equation}
    where $\mathcal L$ is the Ornstein–Uhlenbeck operator in \eqref{eq op OU}, $\langle \cdot,\cdot\rangle$ is the usual inner product on $\mathbb R^n$.
\end{lemma}
\begin{proof}
    This is just the divergence theorem since at least one of $f,g$ has compact support.
\end{proof}
For general $C^2$ functions, we have:
\begin{lemma}\label{int by part w2}
    If $f,g\in C^2(\mathbb{R}^n)$ with
    \begin{equation}\label{w2 finite}
        \int_{\mathbb{R}^n}(|f\nabla g|+|\nabla f||\nabla g|+|f\mathcal{L}g|)e^{-\frac{|y|^2}{4}}dy<\infty,
    \end{equation}
    then we get
    \begin{equation}
        \int_{\mathbb{R}^n}f\mathcal{L}ge^{-\frac{|y|^2}{4}}dy=-\int_{\mathbb{R}^n}\langle\nabla g,\nabla f\rangle e^{-\frac{|y|^2}{4}}dy.
    \end{equation}
\end{lemma}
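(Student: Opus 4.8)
The plan is to reduce the general identity to the compactly supported case already established in Lemma \ref{int by part comp} by inserting a family of cutoff functions and then passing to the limit using the integrability hypothesis \eqref{w2 finite}.

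Concretely, fix a smooth $\phi\colon[0,\infty)\to[0,1]$ with $\phi\equiv 1$ on $[0,1]$, $\phi\equiv 0$ on $[2,\infty)$, and $|\phi'|\le 2$, and for $R>0$ set $\phi_R(y):=\phi(|y|/R)$, so that $\phi_R\equiv 1$ on $B_R$, $\operatorname{supp}\phi_R\subset B_{2R}$, and $|\nabla\phi_R|\le 2/R$. Since $f\phi_R\in C^1(\mathbb{R}^n)$ has compact support, Lemma \ref{int by part comp} applied to the pair $(f\phi_R,g)$ gives
\[
\int_{\mathbb{R}^n}(f\phi_R)\,\mathcal{L}g\,e^{-\frac{|y|^2}{4}}dy
=-\int_{\mathbb{R}^n}\phi_R\langle\nabla g,\nabla f\rangle e^{-\frac{|y|^2}{4}}dy
-\int_{\mathbb{R}^n}f\langle\nabla g,\nabla\phi_R\rangle e^{-\frac{|y|^2}{4}}dy .
\]
Now let $R\to\infty$ term by term. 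On the left-hand side, $(f\phi_R)\mathcal{L}g\to f\mathcal{L}g$ pointwise and $|(f\phi_R)\mathcal{L}g|\le|f\mathcal{L}g|$, which is integrable against $e^{-|y|^2/4}$ by \eqref{w2 finite}; dominated convergence yields convergence to $\int_{\mathbb{R}^n} f\mathcal{L}g\,e^{-|y|^2/4}dy$. The first term on the right converges to $-\int_{\mathbb{R}^n}\langle\nabla g,\nabla f\rangle e^{-|y|^2/4}dy$ by the same argument, using that $|\nabla f||\nabla g|e^{-|y|^2/4}$ is integrable. For the remaining term, the bound $|\nabla\phi_R|\le 2/R$ together with $\operatorname{supp}\nabla\phi_R\subset \overline{B_{2R}\setminus B_R}$ gives
\[
\Big|\int_{\mathbb{R}^n}f\langle\nabla g,\nabla\phi_R\rangle e^{-\frac{|y|^2}{4}}dy\Big|
\le\frac{2}{R}\int_{\mathbb{R}^n\setminus B_R}|f\nabla g|e^{-\frac{|y|^2}{4}}dy,
\]
and the right-hand side tends to $0$ as $R\to\infty$ because $|f\nabla g|e^{-|y|^2/4}$ is globally integrable by \eqref{w2 finite}. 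Combining the three limits yields the claimed identity.

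There is no serious obstacle in this argument: the only place where the full strength of \eqref{w2 finite} is used is the vanishing of the cutoff error, which is exactly why the term $|f\nabla g|$ (and not merely $|\nabla f||\nabla g|$) appears in the hypothesis. One should also note that $f\phi_R$ is only $C^1$, which is all that Lemma \ref{int by part comp} requires, so the stated regularity $f,g\in C^2$ is more than enough.
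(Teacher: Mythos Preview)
Your argument is correct and follows essentially the same approach as the paper: insert a cutoff, apply Lemma~\ref{int by part comp} to $(\phi_R f,g)$, and pass to the limit using \eqref{w2 finite} and dominated convergence. The only cosmetic difference is your choice of cutoff (supported in $B_{2R}$ with $|\nabla\phi_R|\le 2/R$) versus the paper's (supported in $B_{R+1}$ with $|\nabla\phi_R|\le 1$); either works, since the error term is controlled by the tail of an integrable function.
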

\begin{proof}
    Given any $C^1$ function $\phi$ with compact support, we can apply Lemma \ref{int by part comp} to $\phi f$ and $g$ to get
    \begin{equation}
        [\phi f\mathcal{L}g]_W=-[\phi\langle \nabla g,\nabla f\rangle]_W-[f\langle\nabla g,\nabla\phi\rangle]_W.
    \end{equation}
    Next, we apply this with $\phi=\phi_R\geq 0$, where $\phi_R$ is a smooth cut-off function satisfying $\phi_R=1$ on the ball $B_R$ and $\phi_R=0$ on $\mathbb{R}^n\setminus B_{R+1}$ with $|\nabla\phi_R|\leq 1$. Then the dominate convergence theorem gives that
    \begin{align*}
        &[\phi_Rf\mathcal{L}g]_W\to [f\mathcal{L}g]_W,\\
       & [\phi_R\langle\nabla g,\nabla f\rangle]_W\to[\langle\nabla g,\nabla f\rangle]_W,\\
        &[f\langle\nabla g,\nabla\phi_R\rangle]_W\to0.
    \end{align*}
    due to (\ref{w2 finite}).
\end{proof}
Since $u$ is a smooth self-similar solution w.r.t. $(a,T)$ on $\mathbb R^n\times(0,T)$ with positive speed, $w$ is a smooth solution of (\ref{eq1}) in $\mathbb{R}^n$. Using the notation
\begin{equation}
    H:=\tfrac{1}{p-1}w+\tfrac{1}{2}y_iw_i
\end{equation}
to denote the eigenfunction of $L$ with eigenvalue $-1$. We have $H>0$ by lemma \ref{positivity of H}.
\begin{lemma}\label{phi f test}
Suppose that $f$ is a $C^2$ function on $\mathbb{R}^n$ with $Lf=-\mu f$ for $\mu\in\mathbb{R}$. If $f>0$ and $\phi$ is in the weighted $W^{1,2}$ space, then
\begin{equation}\label{phi f test eq}
    \int_{\mathbb{R}^n}\phi^2(p|w|^{p-1}+|\nabla \log f|^2)e^{-\frac{|y|^2}{4}}dy\leq\int_{\mathbb{R}^n}(4|\nabla\phi|^2-2(\mu-\tfrac{1}{p-1})\phi^2)e^{-\frac{|y|^2}{4}}dy.
\end{equation}
\end{lemma}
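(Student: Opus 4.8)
The plan is to exploit the fact that $f>0$ is an eigenfunction of $L$ by working with the logarithmic derivative $\nabla\log f = \nabla f/f$, which is the standard device for converting a spectral inequality into a gradient estimate (as in the stability computations of Colding--Minicozzi). First I would take an arbitrary test function $\phi$ that is smooth with compact support, and consider the quantity $\langle \phi^2 f, L f\rangle_W / $(handled multiplicatively), or more precisely test the eigenvalue equation $Lf = -\mu f$ against $\phi^2/f$ in the weighted inner product. Concretely, one writes
\begin{equation*}
    -\mu\int_{\mathbb{R}^n}\phi^2 e^{-\frac{|y|^2}{4}}dy = \int_{\mathbb{R}^n}\frac{\phi^2}{f}\,Lf\, e^{-\frac{|y|^2}{4}}dy = \int_{\mathbb{R}^n}\frac{\phi^2}{f}\Big(\mathcal{L}f - \tfrac{1}{p-1}f + p|w|^{p-1}f\Big)e^{-\frac{|y|^2}{4}}dy.
\end{equation*}
The term $\tfrac{1}{p-1}$ and the $p|w|^{p-1}$ term come out cleanly, contributing $-\tfrac{1}{p-1}\int\phi^2 + \int p|w|^{p-1}\phi^2$ against the weight. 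So the heart of the matter is the Ornstein--Uhlenbeck term $\int (\phi^2/f)\,\mathcal{L}f\, e^{-|y|^2/4}$.

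Next I would integrate that term by parts using Lemma \ref{int by part comp} (legitimate since $\phi$ has compact support), applied with the roles $f \leftrightarrow \phi^2/f$ and $g \leftrightarrow f$: this yields $-\int \langle \nabla f, \nabla(\phi^2/f)\rangle e^{-|y|^2/4}$. Expanding $\nabla(\phi^2/f) = 2\phi\nabla\phi/f - \phi^2\nabla f/f^2$ gives
\begin{equation*}
    \int_{\mathbb{R}^n}\frac{\phi^2}{f}\mathcal{L}f\, e^{-\frac{|y|^2}{4}}dy = -2\int_{\mathbb{R}^n}\frac{\phi}{f}\langle\nabla\phi,\nabla f\rangle e^{-\frac{|y|^2}{4}}dy + \int_{\mathbb{R}^n}\frac{\phi^2}{f^2}|\nabla f|^2 e^{-\frac{|y|^2}{4}}dy.
\end{equation*}
Now I would absorb the cross term by the Cauchy--Schwarz / Young inequality $2\left|\frac{\phi}{f}\langle\nabla\phi,\nabla f\rangle\right| \le 2|\nabla\phi|^2 + \tfrac{1}{2}\phi^2\frac{|\nabla f|^2}{f^2}$ — the coefficient $\tfrac12$ is chosen so that after absorption a positive multiple of $\phi^2|\nabla\log f|^2$ survives on the good side. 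Rearranging all terms (moving $\int p|w|^{p-1}\phi^2$ and $\int \phi^2|\nabla\log f|^2$ to the left) produces exactly the inequality \eqref{phi f test eq}, possibly with a cleaner constant that I would then track; the factor $4$ in front of $|\nabla\phi|^2$ and the $2(\mu - \tfrac{1}{p-1})$ suggest using the splitting $2ab \le \varepsilon a^2 + \varepsilon^{-1}b^2$ with $\varepsilon$ tuned and then multiplying the whole inequality through by $2$.

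Finally, to pass from compactly supported $\phi$ to general $\phi$ in the weighted $W^{1,2}$ space, I would use a cutoff approximation $\phi_R\phi$ with $\phi_R$ as in the proof of Lemma \ref{int by part w2}, and check that each term converges by dominated convergence — here one uses that $\phi \in W^{1,2}_W$ together with the a priori bound just obtained (so that $\int \phi^2 |\nabla\log f|^2 e^{-|y|^2/4}$ and $\int \phi^2 p|w|^{p-1}e^{-|y|^2/4}$ are controlled uniformly in $R$, making the limit legitimate by monotone/Fatou on the good side and dominated convergence on the test side). The main obstacle I anticipate is precisely this last density step: $p|w|^{p-1}$ need not be bounded, so one cannot dominate $\phi^2 p|w|^{p-1}e^{-|y|^2/4}$ a priori; the fix is to first establish the inequality for $\phi_R\phi$ with the integral over $B_R$ on the left, then let $R\to\infty$ and invoke monotone convergence for the two nonnegative integrands on the left-hand side while the right-hand side converges by the $W^{1,2}_W$ hypothesis on $\phi$ (note $|\nabla(\phi_R\phi)|^2 \le 2\phi_R^2|\nabla\phi|^2 + 2\phi^2|\nabla\phi_R|^2$ and $|\nabla\phi_R|\le 1$, so the right side stays bounded).
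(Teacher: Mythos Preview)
Your proposal is correct and follows essentially the same route as the paper. The only cosmetic difference is that the paper first computes $\mathcal{L}\log f = -\mu + \tfrac{1}{p-1} - p|w|^{p-1} - |\nabla\log f|^2$ and then tests this against $\eta^2$, whereas you test $Lf=-\mu f$ against $\phi^2/f$; since $\mathcal{L}\log f = \mathcal{L}f/f - |\nabla\log f|^2$, these are the same computation. Your treatment of the cross term via $2ab\le 2a^2+\tfrac12 b^2$ and the cutoff passage (monotone convergence on the left, dominated convergence on the right using $|\nabla(\phi_R\phi)|^2\le 2|\nabla\phi|^2+2\phi^2$) match the paper's proof exactly.
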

\begin{proof}
    Since $f>0$, $\log f$ is well defined and we have 
    \begin{equation}
    \begin{aligned}
        \mathcal{L}\log f=&\tfrac{\mathcal{L}f}{f}-|\nabla \log f|^2=\tfrac{Lf+(\frac{1}{p-1}-p|w|^{p-1})f}{f}-|\nabla \log f|^2\\
        =&-\mu+\tfrac{1}{p-1}-p|w|^{p-1}-|\nabla \log f|^2.
    \end{aligned}
    \end{equation}
    Suppose that $\eta$ is a function with compact support. Then, the self-adjointness of $\mathcal{L}$ (Lemma \ref{int by part comp}) gives
    \begin{equation}
        [\langle\nabla\eta^2,\nabla\log f\rangle]_W=-[\eta^2\mathcal{L}\log f]_W=[\eta^2(\mu-\tfrac{1}{p-1}+p|w|^{p-1}+|\nabla \log f|^2)]_W.
    \end{equation}
    Since 
    \begin{align*}
        \langle\nabla\eta^2,\nabla\log f\rangle=2\langle \eta \nabla\eta,\nabla\log f\rangle\leq 2|\nabla\eta|^2+\tfrac{1}{2}\eta^2|\nabla\log f|^2.
    \end{align*}
    We get
    \begin{equation}\label{wp-1 compact}
        [\eta^2(p|w|^{p-1}+|\nabla\log f|^2)]_W\leq [4|\nabla\eta|^2-2(\mu-\tfrac{1}{p-1})\eta^2]_W.
    \end{equation}
    Let $\eta_R\geq 0$ be one on $B_R$ and zero on $\mathbb{R}^n\setminus B_{R+1}$ so that $0\leq\eta\leq 1$ and $|\nabla\eta|\leq 1$. Since $\phi$ is in the weighted $W^{1,2}$ space, applying (\ref{wp-1 compact}) with $\eta=\eta_R\phi$, letting $R\to\infty$ and using the monotone convergence theorem and dominated convergence theorem gives that (\ref{wp-1 compact}) also holds with $\eta=\phi$. 
\end{proof}

\begin{proposition}
    If $H>0$, and $[|w|^{2m}]_W<\infty$ with $m^2-p(2m-1)<0$ and $m>\frac{1}{2}$. Then
    \begin{equation}
        [|w|^{2m}+|w|^{2m+p-1}+|\nabla|w|^{m}|^2]_W<\infty.
    \end{equation}
    In particular, if $p>1+\sqrt{\tfrac{4}{3}}$, we can take $m=\frac{p-1}{2}$. If $[|w|^{2p}]<\infty$, we can take $m=p$.  
\end{proposition}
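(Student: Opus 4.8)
The plan is to play two weighted estimates for $w$ against each other so that the hypothesis $m^{2}-p(2m-1)<0$, $m>\tfrac12$ forces the coefficient of $[|w|^{2m+p-1}]_W$ to be strictly positive, which then bounds it by the finite quantity $[|w|^{2m}]_W$. Throughout I use the positive eigenfunction $H=\tfrac1{p-1}w+\tfrac12 y_iw_i>0$ with $LH=H$ (Corollary~\ref{positivity of H}), and radial cut-offs $\eta_R$ equal to $1$ on $B_R$, supported in $B_{2R}$, with $|\nabla\eta_R|\le C/R$ and $|\Delta\eta_R|\le C/R^{2}$; I want the transition region of width comparable to $R$ precisely so the Ornstein--Uhlenbeck drift $\tfrac12 y\cdot\nabla\eta_R$ stays bounded on the support of $\nabla\eta_R$. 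Since $m>\tfrac12$, the pointwise identities $|w|^{2m-2}w\,\nabla w=\tfrac1{2m}\nabla|w|^{2m}$ and $|w|^{2m-2}|\nabla w|^{2}=\tfrac1{m^{2}}|\nabla|w|^{m}|^{2}$ have locally integrable right-hand sides, so $|w|^m\in W^{1,2}_{\rm loc}$ and the manipulations make sense; near $\{w=0\}$, where $|w|^{2m-2}w$ and $|w|^m$ may fail to be $C^1$, I regularize $|w|$ by $\sqrt{w^{2}+\varepsilon^{2}}$, apply everything for fixed $\varepsilon>0$, and let $\varepsilon\to0$ (legitimate since $2m-2>-1$).

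First I would test \eqref{eq1} against $\eta_R^{2}|w|^{2m-2}w$; by Lemma~\ref{int by part comp} and the identities above this gives the energy identity
\begin{equation*}
\tfrac{2m-1}{m^{2}}\,[\eta_R^{2}|\nabla|w|^{m}|^{2}]_W=[\eta_R^{2}|w|^{2m+p-1}]_W-\tfrac1{p-1}[\eta_R^{2}|w|^{2m}]_W+\mathcal E_R,
\end{equation*}
where $\mathcal E_R=-\tfrac1m[\eta_R\langle\nabla\eta_R,\nabla|w|^{2m}\rangle]_W$; integrating by parts once more and using the cut-off bounds, $|\mathcal E_R|\le C\!\int_{\{|y|>R\}}|w|^{2m}e^{-|y|^{2}/4}\,dy\to0$ because $[|w|^{2m}]_W<\infty$. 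Next I would redo the computation in the proof of Lemma~\ref{phi f test} with $f=H$ (so that $\mathcal L\log H=\tfrac p{p-1}-p|w|^{p-1}-|\nabla\log H|^{2}$ by $LH=H$), but with a free Young parameter, $2\eta\langle\nabla\eta,\nabla\log H\rangle\le\delta\eta^{2}|\nabla\log H|^{2}+\tfrac1\delta|\nabla\eta|^{2}$; this yields, for every $\delta\in(0,1)$ and every compactly supported $\eta$ in the (local) weighted $W^{1,2}$ space,
\begin{equation*}
[\eta^{2}(p|w|^{p-1}+(1-\delta)|\nabla\log H|^{2})]_W\le\tfrac1\delta[|\nabla\eta|^{2}]_W+\tfrac p{p-1}[\eta^{2}]_W.
\end{equation*}
Applying this with $\eta=\eta_R|w|^{m}$ and expanding $|\nabla(\eta_R|w|^{m})|^{2}=\eta_R^{2}|\nabla|w|^{m}|^{2}+\eta_R\langle\nabla\eta_R,\nabla|w|^{2m}\rangle+|w|^{2m}|\nabla\eta_R|^{2}$, the last two terms are again $O\big(\int_{\{|y|>R\}}|w|^{2m}e^{-|y|^{2}/4}dy\big)=o(1)$.

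Substituting the energy identity into this inequality, the $[\eta_R^{2}|\nabla|w|^{m}|^{2}]_W$ terms combine (up to $o(1)$) to leave
\begin{equation*}
\Big(p-\tfrac{m^{2}}{\delta(2m-1)}\Big)[\eta_R^{2}|w|^{2m+p-1}]_W+(1-\delta)[\eta_R^{2}|w|^{2m}|\nabla\log H|^{2}]_W\le\tfrac1{p-1}\Big(p-\tfrac{m^{2}}{\delta(2m-1)}\Big)[|w|^{2m}]_W+o(1).
\end{equation*}
Since $m>\tfrac12$ gives $2m-1>0$ and $m^{2}-p(2m-1)<0$ gives $\tfrac{m^{2}}{p(2m-1)}<1$, I can fix $\delta\in\big(\tfrac{m^{2}}{p(2m-1)},1\big)$, making both $p-\tfrac{m^{2}}{\delta(2m-1)}$ and $1-\delta$ positive; dropping the nonnegative $|\nabla\log H|^{2}$-term and dividing gives $[\eta_R^{2}|w|^{2m+p-1}]_W\le\tfrac1{p-1}[|w|^{2m}]_W+o(1)$ uniformly in $R$, whence $[|w|^{2m+p-1}]_W<\infty$ by monotone convergence, and then $[|\nabla|w|^{m}|^{2}]_W<\infty$ from the energy identity in the limit; together with the hypothesis $[|w|^{2m}]_W<\infty$ this is the claim (in fact one even gets the sharp bound $[|w|^{2m+p-1}]_W\le\tfrac1{p-1}[|w|^{2m}]_W$). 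For the two particular cases: taking $\eta=\eta_R$ (that is $\phi\equiv1$) in the second displayed inequality and letting $R\to\infty$ gives $[|w|^{p-1}]_W\le\tfrac1{p-1}[1]_W<\infty$ with no extra assumption, so $m=\tfrac{p-1}2$ is admissible once $m>\tfrac12$ (i.e. $p>2$) and $m^{2}-p(2m-1)=\tfrac14(-3p^{2}+6p+1)<0$, which jointly are exactly $p>1+\sqrt{4/3}$; and $m=p$ is admissible whenever $[|w|^{2p}]_W<\infty$, since $p>\tfrac12$ and $m^{2}-p(2m-1)=p-p^{2}<0$.

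The algebra of the combination step is immediate once the two estimates are in hand; the real work, and the main obstacle, is the localization. The delicate point is the choice of cut-offs: an ordinary width-one $\eta_R$ only yields $|\mathcal E_R|\lesssim R\int_{\{|y|>R\}}|w|^{2m}e^{-|y|^{2}/4}dy$, which need not tend to $0$ merely because $[|w|^{2m}]_W<\infty$, so the slowly varying choice is essential to control the drift. The second technical nuisance is the lack of $C^1$ regularity of $|w|^{2m-2}w$ and $|w|^m$ along $\{w=0\}$ when $\tfrac12<m<1$, dealt with by the $\varepsilon$-regularization, which is harmless since $2m-2>-1$ keeps every integrand locally integrable.
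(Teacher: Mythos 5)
Your proof is correct and follows essentially the same route as the paper: the stability inequality obtained from $\mathcal{L}\log H$ tested against $(\eta|w|^m)^2$, combined with the identity coming from testing the equation against $\eta^2|w|^{2m-2}w$, with the condition $m^2<p(2m-1)$ (and $m>\tfrac12$) used to absorb the $[|w|^{2m+p-1}]_W$ term, and the same treatment of the two particular cases. The only differences are technical: the paper handles the cutoff cross-terms by Young's inequality, so they need only remain bounded by $C[|w|^{2m}]_W$ and ordinary width-one cutoffs suffice, whereas you integrate them by parts and use slowly-varying cutoffs to make them vanish (yielding the sharper constant $\tfrac{1}{p-1}$); your explicit regularization of $|w|^m$ near $\{w=0\}$ for $\tfrac12<m<1$ addresses a regularity point the paper passes over in silence.
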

\begin{proof}
    First, since $H>0$, $\log H$ is well defined and 
\begin{equation}\label{mathcal log H}
\begin{aligned}
    \mathcal{L}\log H=&-|\nabla\log H|^2+\frac{\Delta H-\frac{1}{2}y\cdot\nabla H}{H}\\
    =&-|\nabla\log H|^2+\frac{H+(\tfrac{1}{p-1}-p|w|^{p-1})H}{H}\\
    =&-|\nabla\log H|^2+\tfrac{p}{p-1}-p|w|^{p-1}.
\end{aligned}
\end{equation}
Given any compactly supported function $\phi$, self-adjointness of $\mathcal{L}$ (Lemma \ref{int by part comp}) gives
\begin{equation}
        [\langle\nabla\phi^2,\nabla\log H\rangle]_W=-[\phi^2\mathcal{L}\log H]_W=[\phi^2(-\tfrac{p}{p-1}+p|w|^{p-1}+|\nabla \log H|^2)]_W.
 \end{equation}
 Combining this with the Cauchy inequality 
 $$|\langle\nabla \phi^2,\nabla\log H\rangle|=2|\langle\phi\nabla\phi,\nabla\log H\rangle|\leq |\nabla\phi|^2+\phi^2|\nabla\log H|^2$$ 
 gives
 \begin{equation}
     [\phi^2|w|^{p-1}]_W\leq[\tfrac{1}{p-1}\phi^2+\tfrac{1}{p}|\nabla\phi|^2]_W.
 \end{equation}
 We will apply this with $\phi=\eta |w|^{m}$ where $\eta\geq 0$ is a smooth non-negative function with compact support and $m>0$ is a real number. This gives
 \begin{equation}\label{stability ineq}
 \begin{aligned}
     &[\eta^2|w|^{2m+p-1}]_W\\
     \leq&[\tfrac{1}{p}(\eta^2|\nabla|w|^{m}|^2+|\nabla\eta|^2|w|^{2m}+2\eta|w|^m\langle\nabla\eta,\nabla |w|^{m}\rangle)+\tfrac{1}{p-1}\eta^2|w|^{2m}]_W\\
     \leq&[\tfrac{1+\varepsilon}{p}\eta^2|\nabla|w|^{m}|^2]_W+[|w|^{2m}(\tfrac{1+\frac{1}{\varepsilon}}{p}|\nabla\eta|^2+\tfrac{1}{p-1}\eta^2)]_W\\
     =&\tfrac{1+\varepsilon}{p}m^2[\eta^2|w|^{2m-2}|\nabla w|^2]_W+[|w|^{2m}(\tfrac{1+\frac{1}{\varepsilon}}{p}|\nabla\eta|^2+\tfrac{1}{p-1}\eta^2)]_W,
\end{aligned}
 \end{equation}
 where $\varepsilon>0$ is arbitrary and the last inequality used the inequality $2ab\leq \varepsilon a^2+\frac{1}{\varepsilon}b^2$.

 Second, using the definition of $L$ and the fact that $w$ is a solution of (\ref{eq1}), we get that for any positive number $m'$, we have
 \begin{equation}\label{L wm}
     \begin{aligned}
         &\mathcal{L}|w|^{m'}=m'|w|^{m'-2}w\mathcal{L}w+m'(m'-1)|w|^{m'-2}|\nabla w|^2\\
         =&m'|w|^{m'-2}w(Lw+(\tfrac{1}{p-1}-p|w|^{p-1})w)+m'(m'-1)|w|^{m'-2}|\nabla w|^2\\
         =&m'|w|^{m'-2}w((p-1)|w|^{p-1}w+(\tfrac{1}{p-1}-p|w|^{p-1})w)+m'(m'-1)|w|^{m'-2}|\nabla w|^2\\
         =&m'|w|^{m'}(\tfrac{1}{p-1}-|w|^{p-1})+m'(m'-1)|w|^{m'-2}|\nabla w|^2\\
         =&m'(m'-1)|w|^{m'-2}|\nabla w|^2+\tfrac{m'}{p-1}|w|^{m'}-m'|w|^{m'+p-1}.
     \end{aligned}
 \end{equation}
 Integrating this against $\eta^2$ and using the self-adjointness of $\mathcal{L}$ (Lemma \ref{int by part comp}) gives
 \begin{equation}
 \begin{aligned}
    &-[2m'\langle\eta\nabla\eta,|w|^{m'-2}w\nabla w]_W\\
    =&[m'(m'-1)\eta^2|w|^{m'-2}|\nabla w|^2+\tfrac{m'}{p-1}\eta^2|w|^{m'}-m'\eta^2|w|^{m'+p-1}]_W.
\end{aligned}
\end{equation}
 Using the inequality $2ab\leq \varepsilon a^2+\frac{1}{\varepsilon}b^2$ again gives
 \begin{equation}\label{eign ineq}
     [\eta^2|w|^{m'+p-1}]_W+[\tfrac{1}{\varepsilon}|w|^{m'}|\nabla\eta|^2]_W\geq ((m'-1)-\varepsilon)[\eta^2|w|^{m'-2}|\nabla w|^2]_W.
 \end{equation}
 Plugging (\ref{eign ineq}) with $m'=2m$ into (\ref{stability ineq}) gives
 \begin{equation}\label{eq int}
 \begin{aligned}
   &[\eta^2|w|^{2m+p-1}]_W\\
   \leq &\tfrac{1+\varepsilon}{p}\tfrac{m^2}{2m-1-\varepsilon}[\eta^2|w|^{2m+p-1}]_W+[|w|^{2m}((\tfrac{1+\frac{1}{\varepsilon}}{p}+\tfrac{(1+\varepsilon)m^2}{p(2m-1-\varepsilon)\varepsilon})|\nabla\eta|^2+\tfrac{1}{p-1}\eta^2)]_W .
\end{aligned}
\end{equation}
In order to use the above inequality to get the upper bound for $ [\eta^2|w|^{2m+p-1}]$, we need $\tfrac{m^2}{p(2m-1)}<1$, that is,
\begin{equation}
    m^2-p(2m-1)=m^2-2pm+p=(m-p)^2-p^2+p<0,
\end{equation}
which is satisfied by the assumption on $m,p$. Thus we can take $\varepsilon>0$ sufficiently small to absorb the term $\tfrac{1+\varepsilon}{p}\tfrac{m^2}{2m-1-\varepsilon}[\eta^2|w|^{2m+p-1}]_W$ into the left hand side of \eqref{eq int} to get
\begin{equation}
    [\eta^2|w|^{2m+p-1}]_W\leq C(p,\tfrac{1}{p-1},m,\varepsilon)[|w|^{2m}(|\nabla\eta|^2+|\eta|^2)]_W.
\end{equation}
We take $\eta=\eta_R\geq 0$ such that $\eta_R=1$ on $B_R$ and $\eta_R=0$ on $\mathbb{R}^n\setminus B_{R+1}$ so that $|\nabla\eta_R|\leq 1$. Since $[|w|^{2m}]_W<\infty$, the monotone convergence theorem then implies $[|w|^{2m+p-1}]_W<\infty$ by letting $R\to\infty$. Using (\ref{eign ineq}), we get $[|\nabla |w|^m|^2]_W=[m^2|w|^{2m-2}|\nabla w|^2]_W<\infty$ by monotone convergence theorem.

If $p>1+\sqrt{\tfrac{4}{3}}$, we can take $m=\tfrac{p-1}{2}$. In fact, if we take $f=H$ and $\phi\equiv1$ in Lemma \ref{phi f test}, then (\ref{phi f test eq}) implies that $[|w|^{p-1}]_W<\infty$. On the other hand, $0<\tfrac{(p-1)^2}{4p(p-2)}<1,\tfrac{p-1}{2}>\frac{1}{2}\Leftrightarrow3p^2-6p-1>0,p>2\Leftrightarrow p>1+\sqrt{\tfrac{4}{3}}$. Thus we can take $m=\tfrac{p-1}{2}$ when $p>1+\sqrt{\tfrac{4}{3}}$.

If $[|w|^{2p}]_W<\infty$, we can take $m=p>1$, so that $\tfrac{m^2}{p(2m-1)}=\tfrac{p^2}{p(2p-1)}<1$ since $p>1$. 

\end{proof}

\begin{proposition}
If $H>0$, and $|w|^m$ is in the weighted $W^{1,2}$ space (i.e. $[|w|^{2m}+|\nabla|w|^m|^2]_W<\infty$) and $[|w|^{2m+p-1}]_W<\infty$ with $m^2-p(2m-1)\leq (<)0$ and $m>\frac{1}{2}$, then $|w|^m\nabla\log H=\nabla |w|^{m}$ (and $|w|^{2m-2}|\nabla w|^2=0$). Consequently, $\nabla\log H=\nabla \log |w|^m$ (and $\nabla w=0$) or $w=0$.    
\end{proposition}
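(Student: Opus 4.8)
The plan is to revisit the chain of inequalities behind the preceding proposition and track \emph{exactly} when each inequality is an equality; the rigidity is precisely the equality case. The starting point is the identity \eqref{mathcal log H}, $\mathcal{L}\log H=-|\nabla\log H|^2+\tfrac{p}{p-1}-p|w|^{p-1}$, which holds since $H>0$. Testing it against $\phi^2$ with $\phi=|w|^m$ and using the self-adjointness of $\mathcal L$ (Lemma \ref{int by part w2}, preceded by the usual cut-off argument as in Lemma \ref{phi f test}: localize with $\eta_R$, bound the right-hand side uniformly in $R$ using the hypotheses $[|w|^{2m}+|\nabla|w|^m|^2+|w|^{2m+p-1}]_W<\infty$, then let $R\to\infty$ by monotone/dominated convergence) one obtains both $[|w|^{2m}|\nabla\log H|^2]_W<\infty$ and the identity
\[
[\langle\nabla|w|^{2m},\nabla\log H\rangle]_W=[|w|^{2m}|\nabla\log H|^2]_W-\tfrac{p}{p-1}[|w|^{2m}]_W+p[|w|^{2m+p-1}]_W .
\]
Now use the pointwise estimate $\langle\nabla|w|^{2m},\nabla\log H\rangle=2|w|^m\langle\nabla|w|^m,\nabla\log H\rangle\le|\nabla|w|^m|^2+|w|^{2m}|\nabla\log H|^2$, whose deficit is exactly $\big|\nabla|w|^m-|w|^m\nabla\log H\big|^2$. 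Integrating and rearranging (and using $|\nabla|w|^m|^2=m^2|w|^{2m-2}|\nabla w|^2$) yields the \emph{sharp} inequality
\[
(\star)\qquad [|w|^{2m+p-1}]_W\le\tfrac{1}{p-1}[|w|^{2m}]_W+\tfrac{m^2}{p}[|w|^{2m-2}|\nabla w|^2]_W ,
\]
with equality if and only if $\nabla|w|^m=|w|^m\nabla\log H$ a.e., hence everywhere on $\{w\neq0\}$ by smoothness.

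The second ingredient is an \emph{exact} identity obtained by integrating \eqref{L wm} with $m'=2m$ against the constant function $1$: since $[\mathcal L|w|^{2m}]_W=0$ by Lemma \ref{int by part w2} with $f\equiv1$ (whose integrability hypotheses follow again from the stated finiteness together with $m>\tfrac12$), we get
\[
(\star\star)\qquad [|w|^{2m+p-1}]_W=(2m-1)[|w|^{2m-2}|\nabla w|^2]_W+\tfrac{1}{p-1}[|w|^{2m}]_W .
\]
Writing $I:=[|w|^{2m-2}|\nabla w|^2]_W\ge0$ and inserting $(\star\star)$ into the left side of $(\star)$ gives $\big((2m-1)-\tfrac{m^2}{p}\big)I\le0$, i.e. $-\tfrac{m^2-p(2m-1)}{p}\,I\le0$. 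Since $m^2-p(2m-1)\le0$ by hypothesis, the coefficient is $\ge0$, so in fact $\big((2m-1)-\tfrac{m^2}{p}\big)I=0$. If $m^2-p(2m-1)<0$ the coefficient is strictly positive, forcing $I=0$ and hence $|w|^{2m-2}|\nabla w|^2\equiv0$; if $m^2-p(2m-1)=0$ the relation just derived reads $\tfrac{m^2}{p}I\le\tfrac{m^2}{p}I$, so $(\star)$ was already an equality. In either case $(\star)$ is an equality, whence by the rigidity statement in the first paragraph $\nabla|w|^m=|w|^m\nabla\log H$ on all of $\mathbb R^n$.

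For the final ``consequently'': at any point with $w\neq0$ divide by $|w|^m>0$ to obtain $\nabla\log H=\nabla|w|^m/|w|^m=\nabla\log|w|^m$, and in the strict case $|w|^{2m-2}|\nabla w|^2\equiv0$ together with $w\neq0$ there forces $\nabla w=0$; at points with $w=0$ the second alternative holds trivially. (One may then bootstrap in the strict case: $\nabla w=0$ on the open set $\{w\neq0\}$ together with connectedness of $\mathbb R^n$ and continuity of $w$ forces $w$ to be constant or $w\equiv0$, which is what Theorem \ref{main theo}(2) requires.)

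I expect the only real obstacle to be technical rather than conceptual: justifying the weighted integrations by parts on the noncompact domain $\mathbb R^n$ — checking the integrability condition \eqref{w2 finite} for each use of Lemma \ref{int by part w2}, and in particular proving a priori that $[|w|^{2m}|\nabla\log H|^2]_W<\infty$ so that no boundary term survives as $R\to\infty$. This is handled exactly as in the proofs of Lemma \ref{phi f test} and the preceding proposition (cut off with $\eta_R$, use $|\nabla\eta_R|\le1$ and finiteness of $[|w|^{2m}]_W$ to dominate the error terms uniformly, then apply monotone and dominated convergence). Once the integrals are under control, the key step — recognizing that the equality case of the Colding–Minicozzi–type stability inequality $(\star)$ is precisely the pointwise relation $\nabla|w|^m=|w|^m\nabla\log H$ — is immediate.
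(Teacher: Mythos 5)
Your proposal is correct and follows essentially the same route as the paper: the identity from testing $\mathcal{L}\log H$ against $|w|^{2m}$ (the paper's \eqref{inner prod}), the exact identity from integrating $\mathcal{L}|w|^{2m}$ (equivalent to the paper's \eqref{grad eq}), and the Cauchy--Schwarz deficit $\bigl|\nabla|w|^m-|w|^m\nabla\log H\bigr|^2$ combine into exactly the paper's inequality \eqref{eq fin}, with the same case analysis on the sign of $2m-1-\tfrac{m^2}{p}$. The only cosmetic differences are that you test $\mathcal{L}|w|^{2m}$ against the constant $1$ rather than writing it as $[|w|^m\mathcal{L}|w|^m]_W$, and that you phrase the conclusion as the equality case of a stability inequality $(\star)$; the integrability checks you defer are handled in the paper exactly as you describe.
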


 \begin{proof}
 Since $|w|^m$ is in the weighted $W^{1,2}$ space,
 \begin{equation}\label{eq int1}
 [|w|^{2m}|\nabla\log H|^2]_W<\infty
 \end{equation}
 by taking $\phi=|w|^m$ and $f=H$ in Lemma \ref{phi f test}. Moreover, by Cauchy inequality, we get
 \begin{align*}
 |w|^{2m}|\nabla\log H|\leq& \frac{1}{2}(|w|^{2m}+|w|^{2m}|\nabla\log H|^2)
 \end{align*}
 and\begin{align*}
     &|\nabla|w|^{2m}||\nabla\log H|=2m|w|^{2m-1}|\nabla w||\nabla\log H|\\
     \leq& m^2|w|^{2m-2}|\nabla w|^2+ |w|^{2m}|\nabla\log H|^2
     =m^2|\nabla |w|^m|^{2}+|w|^{2m}|\nabla\log H|^2.
     \end{align*}
These two inequalities and \eqref{eq int1} implies
    \begin{equation}\label{w2m gra2m}
        [|w|^{2m}|\nabla\log H|+|\nabla|w|^{2m}||\nabla\log H|]_W<\infty.
    \end{equation}
since $|w|^m$ is in the weighted $W^{1,2}$ space by assumption. Further more, since $[|w|^{2m+p-1}]<\infty$ by assumption, and
\begin{align*}
     \mathcal{L}\log H=&-|\nabla\log H|^2+\tfrac{p}{p-1}-p|w|^{p-1}
\end{align*}
by (\ref{mathcal log H}), we get
\begin{align*}
    |w|^{2m}|\mathcal{L}\log H|\leq& |w|^{2m}||\nabla\log H|^2+\tfrac{p}{p-1}+p|w|^{p-1}|\\
    \leq&  |w|^{2m}|\nabla\log H|^2+\tfrac{p}{p-1}|w|^{2m}+p|w|^{2m+p-1},
 \end{align*}
This implies
 \begin{equation}\label{gra log 2m}
     [||w|^{2m}\mathcal{L}\log H|]_W<\infty.
 \end{equation}
Combining (\ref{w2m gra2m}) and (\ref{gra log 2m}), we can apply Lemma \ref{int by part w2} (take $f=|w|^m$ and $g=\log H$ there) to get
 \begin{equation}\label{inner prod}
     \begin{aligned}
        &[\langle \nabla |w|^{2m},\nabla\log H^{\frac{m}{p}}\rangle]_W
        =-\tfrac{m}{p}[|w|^{2m}\mathcal{L}\log H]_W\\
        =&-\tfrac{m}{p}[|w|^{2m}((\tfrac{p}{p-1}-p|w|^{p-1})-|\nabla\log H|^2)]_W\\
        =&\,\tfrac{m}{p}[p|w|^{2m+p-1}-\tfrac{p}{p-1}|w|^{2m}+|w|^{2m}|\nabla\log H|^2]_W.\\
 \end{aligned} 
 \end{equation}
On the other hand,
 \begin{equation}
      \mathcal{L}|w|^{m}=m|w|^{m}(\tfrac{1}{p-1}-|w|^{p-1})+m(m-1)|w|^{m-2}|\nabla w|^2,
 \end{equation}
by (\ref{L wm}). Since $|w|^m$ is in the weighted $W^{1,2}$ space and $[|w|^{2m+p-1}]<\infty$, this together with the inequality 
 \begin{align*}
     |w|^m|\mathcal{L}|w|^m|=&|m|w|^{2m}(\tfrac{1}{p-1}-|w|^{p-1})+m(m-1)|w|^{2m-2}|\nabla w|^2|\\
     \leq& \tfrac{m}{p-1}|w|^{2m}+m|w|^{2m+p-1}+m(m-1)|\nabla|w|^m|^2
 \end{align*}
 implies
 \begin{equation}
     [|w|^m|\nabla|w|^m|+|\nabla|w|^m|^2+||w|^m\mathcal{L}|w|^m|]_W<\infty.
 \end{equation}
 Thus, we can apply Lemma \ref{int by part w2} with $f=g=|w|^m$ to get
 \begin{equation}\label{grad eq}
    \begin{aligned}
      &[|\nabla|w|^m|^2]_W=-[|w|^m\mathcal{L}|w|^m]_W\\
      =&m[|w|^{2m}(|w|^{p-1}-\tfrac{1}{p-1})]_W-m(m-1)[|w|^{2m-2}|\nabla w|^2]_W.
 \end{aligned}  
 \end{equation}

Combining (\ref{inner prod}) and (\ref{grad eq}) gives
\begin{equation}\label{eq inne}
    \begin{aligned}
    &[\langle \nabla |w|^{2m},\nabla\log H^{\frac{m}{p}}\rangle]_W\\
    =&[|\nabla|w|^m|^2+m(m-1)|w|^{2m-2}|\nabla w|^2+\tfrac{m}{p}|w|^{2m}|\nabla\log H|^2]_W\\
    =&[m(2m-1)|w|^{2m-2}|\nabla w|^2+\tfrac{m}{p}|w|^{2m}|\nabla\log H|^2]_W.
\end{aligned}
\end{equation}
However,
\begin{equation}\label{ineq inne}
\begin{aligned}
    &[\langle \nabla |w|^{2m},\nabla\log H^{\frac{m}{p}}\rangle]_W
    =[2m\tfrac{m}{p}\langle|w|^{2m-2}w\nabla w,\nabla\log H\rangle]_W\\
    \leq& [m^2\tfrac{m}{p}|w|^{2m-2}|\nabla w|^2+\tfrac{m}{p}|w|^{2m}|\nabla\log H|^2]_W.\\
    \end{aligned}
\end{equation}
Thus, (\ref{eq inne}) and (\ref{ineq inne}) implies
\begin{equation}\label{eq fin}
   [\tfrac{m}{p}||w|^m\nabla\log H-m|w|^{m-2}w\nabla w|^2+m(2m-1-\tfrac{m^2}{p})|w|^{2m-2}|\nabla w|^2]_W\leq 0.
\end{equation}
Since $m>\frac{1}{2}$ and $(2m-1)\geq(>)\tfrac{m^2}{p}$ hold if $m^2-p(2m-1)\leq (<)0$. This implies "=" holds in (\ref{eq fin}), and we have $|w|^m\nabla\log H=m|w|^{m-2}w\nabla w=\nabla |w|^{m}$ (and $|w|^{2m-2}|\nabla w|^2=0$) $\Leftrightarrow\nabla\log H=\nabla \log |w|^m$ (and $\nabla w= 0$) when $w\neq 0$.
\end{proof}

\begin{corollary}\label{cor dictomy}
If $H>0$, and $p>1+\sqrt{\tfrac{4}{3}}$ (resp. $[|w|^{2p}]_W<\infty$) then $|w|^m\nabla\log H=\nabla |w|^{m}$ and $|w|^{2m-2}|\nabla w|^2=0$. Consequently, $\nabla\log H=\nabla \log |w|^m$ and $\nabla w=0$, or $w=0$ for $m=\tfrac{p-1}{2}$ ( resp. $m=p$).
\end{corollary}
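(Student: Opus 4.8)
The plan is to obtain this corollary by chaining the two propositions proved just above: the first upgrades the single integrability hypothesis $[|w|^{2m}]_W<\infty$ into membership of $|w|^m$ in the weighted $W^{1,2}$ space together with $[|w|^{2m+p-1}]_W<\infty$, and the second then forces the rigidity $|w|^m\nabla\log H=\nabla|w|^m$ and $|w|^{2m-2}|\nabla w|^2=0$. So no new idea is needed; it only remains to check that the two distinguished choices of $m$ meet all the hypotheses in the two cases.

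In the case $p>1+\sqrt{\tfrac43}$ I would take $m=\tfrac{p-1}{2}$. First, $m>\tfrac12$ is equivalent to $p>2$, which holds since $1+\sqrt{\tfrac43}>2$. Second, $m^2-p(2m-1)=\tfrac{(p-1)^2}{4}-p(p-2)=\tfrac{-3p^2+6p+1}{4}$, which is $<0$ exactly when $3p^2-6p-1>0$, i.e. when $p>1+\sqrt{\tfrac43}$, so the strict inequality needed by the propositions holds. The remaining hypothesis, $[|w|^{2m}]_W=[|w|^{p-1}]_W<\infty$, is where a little care is needed: apply Lemma \ref{phi f test} with $f=H$ and $\phi\equiv1$. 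By Corollary \ref{positivity of H}, $H>0$ and $LH=H$, so in the notation of that lemma $\mu=-1$; and $\phi\equiv1$ lies in the weighted $W^{1,2}$ space since $[1]_W=(4\pi)^{n/2}<\infty$. The right-hand side of \eqref{phi f test eq} is then the finite constant $2\big(1+\tfrac1{p-1}\big)[1]_W$, and since $p|w|^{p-1}\geq0$ this bounds $[|w|^{p-1}]_W$ from above. Now the first proposition applies and gives $[|w|^{2m}+|w|^{2m+p-1}+|\nabla|w|^m|^2]_W<\infty$; these are precisely the hypotheses of the second proposition in its strict form, whose conclusion is the corollary for $m=\tfrac{p-1}{2}$.

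In the case $[|w|^{2p}]_W<\infty$ I would take $m=p$: then $m>\tfrac12$ since $p>1$, and $m^2-p(2m-1)=p^2-p(2p-1)=p(1-p)<0$, again strictly, while the required $[|w|^{2m}]_W=[|w|^{2p}]_W<\infty$ is the standing assumption. The two propositions apply exactly as before, giving the corollary for $m=p$. Finally, for the concluding clause I would note that it is the pointwise dichotomy already observed at the end of the proof of the second proposition: where $w\neq0$, continuity of $w$ makes $|w|^{2m-2}>0$ on a neighbourhood, so $|w|^{2m-2}|\nabla w|^2=0$ forces $\nabla w=0$, hence $\nabla|w|^m=0$, and then $|w|^m\nabla\log H=\nabla|w|^m$ with $|w|^m\neq0$ gives $\nabla\log H=0=\nabla\log|w|^m$. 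I do not expect a genuine obstacle: the substantive work lives entirely in the two propositions, and the only delicate point is the use of Lemma \ref{phi f test} to secure $[|w|^{p-1}]_W<\infty$ in the first case.
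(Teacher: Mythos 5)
Your proposal is correct and follows essentially the same route as the paper: the paper's proof of the corollary is literally ``this follows from the above two propositions,'' and the verification that $m=\tfrac{p-1}{2}$ (via Lemma \ref{phi f test} with $f=H$, $\phi\equiv 1$ to get $[|w|^{p-1}]_W<\infty$, plus the equivalence $m^2-p(2m-1)<0\Leftrightarrow p>1+\sqrt{4/3}$) and $m=p$ satisfy the hypotheses is exactly the content of the closing paragraphs of the first proposition's proof. Your arithmetic and the final pointwise dichotomy are all accurate.
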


\begin{proof}
    This follows from the above two propositions.
\end{proof}

\section {Proof of and Theorem \ref{main theo} and Theorem \ref{cor}}\label{the}
In this section, we prove Theorem \ref{main 0}, Theorem \ref{main theo} and Theorem \ref{cor}.

\begin{proof}[Proof of Theorem \ref{main theo}]
Since $u$ has positive speed, $w(0)=H(0)>0$ by Lemma \ref{positivity of H}. Hence $w\neq 0$ in a neighborhood $U$ of $0$ by continuity of $w$. Thus $\nabla\log\tfrac{|w|^m}{H}=0$ and $\nabla w=0$ in $U$ by Corollary \ref{cor dictomy}, i.e. $\tfrac{|w|^m}{H}=c_1>0$ and $|w|=c_2>0$ in $U$ for some positive constants $c_1,c_2$. By continuity, $w$ doesn't change sign in $U$. Thus, the set $B:=\{|w|=c_2\}$ is an nonempty open set. On the other hand, $B$ is a closed set by continuity. Thus $B=\mathbb{R}^n$. Plugging $c_2$ into the equation (\ref{eq1}), we get $c_2=\kappa=(\tfrac{1}{p-1})^{\frac{1}{p-1}}$. Moreover, since $w(0)>0$, $w=\kappa$.
\end{proof}

\begin{proof}[Proof of Theorem \ref{main 0}]
 This follows from (\ref{wat}), Theorem \ref{main theo} and a change of variable.
\end{proof}

\begin{proof}[Proof of Theorem \ref{cor}]
  Suppose $(a,T)$ is a blow up point of $u$. Let $w(y,s)=w_{a,T}(y,s)$, where $D_{a,T,\Omega}$ is defined in (\ref{wat}) and ($\ref{datomega}$). By Corollary 3.4 of \cite{fm}, $a$ is contained in a compact subset $K$ of $\Omega$. Fix an open subset $\Omega'$ of $\Omega$ such that $K\Subset\Omega'\Subset\Omega$. 
  By maximum principle and Theorem 4.2 of \cite{fm}, 
  $$0\leq u(x,t)\leq \tfrac{C(\varphi,n,p,\Omega)}{(T-t)^{\frac{1}{p-1}}}\text{ in }\Omega\times (0,T)$$ 
  for some universal constant $C(\varphi,n,p,\Omega)$ depending only on $\varphi,n,p,\Omega$. Equivalently, 
  $$0\leq w(y,s)\leq C(\varphi,n,p,\Omega)\text{ for }(y,s)\in D_{a,T,\Omega}.$$ 
  By Proposition 1' of \cite{gk85 asy sim}, 
  \begin{align*}
      |\nabla w|+|\nabla^2 w|\leq C'(\varphi,n,p,\Omega,\Omega'),\\
      |w_s|\leq C'(\varphi,n,p,\Omega,\Omega')(1+|y|),
  \end{align*}
  for $(y,s)\in D_{a,e^{-1}T,\Omega'}$. Applying Schauder theory for linear parabolic equations to (\ref{eq1}) yields
  \begin{align*}
      |\nabla^2 w|_{C^{2,\alpha}}\leq C''(\varphi,n,p,\Omega,\Omega',\alpha),\\
      |w_s|_{C^\alpha}\leq C''(\varphi,n,p,\Omega,\Omega',\alpha)(1+|y|),
  \end{align*}
  for $(y,s)\in D_{a,e^{-1}T,\Omega'}$. For any sequence $s_i\to\infty$, define $$w^{(i)}(y,s):=w(y,s+s_i), (y,s+s_i)\in D_{a,e^{-1}T,\Omega'}.$$
  By Arzel\`a-Ascoli theorem, there is a subsequence of $\{w^{(i)}\}_{i=1}^{\infty}$ (still denoted by $\{w^{(i)}\}_{i=1}^\infty$) which converges to a solution $\hat{w}$ of (\ref{eqw}) in $C^2_{loc}(\mathbb{R}^{n+1})$ as $i\to\infty$, with the estimate 
  \begin{align*}
     |\nabla^2 \hat{w}|_{C^{2,\alpha}}\leq C''(\varphi,n,p,\Omega,\Omega',\alpha),\\
     |\hat{w}_s|_{C^\alpha}\leq C''(\varphi,n,p,\Omega,\Omega',\alpha)(1+|y|)
  \end{align*}
  for $(y,s)\in \mathbb{R}^n\times(-\log T+1,\infty)$. On the other hand, recall the energy functional in \cite{gk85 asy sim},
\begin{align*}
    E(w(s)):=\int_{D_s}\big[\tfrac{1}{2}|\nabla w|^2+\tfrac{1}{2(p-1)}|w|^2-\tfrac{1}{p+1}| w|^{p+1}\big]\rho dy
    \end{align*}
    and
\begin{align*}
   E(\hat w(s)):=\int_{\mathbb{R}^n\times\{s\}}\big[\tfrac{1}{2}|\nabla \hat w|^2+\tfrac{1}{2(p-1)}|\hat w|^2-\tfrac{1}{p+1}| \hat w|^{p+1}\big]\rho dy,
\end{align*}
where $D_s:=D_{a,T,\Omega}\cap (\mathbb R^{n}\times \{s\})$, $\rho(y)=(4\pi)^{-\frac{n}{2}}e^{-\frac{|y|^2}{4}}$. Thanks to the exponential decay of $e^{-\frac{|y|^2}{4}}$, and $w^{(i)}(\cdot,s)\to \hat w(\cdot,s)$ in $C^{2}_{loc}(\mathbb R^n)$, we obtain 
  $$E(\hat{w}(s))=\lim_{i\to\infty}E(w^{(i)}(s)).$$
  Since $\Omega$ convex, it is star-shaped with respect to $a$, and $E(w(s))$ is monotone non-increasing in $s$ by (2.18) of \cite{gk85 sim var}. Thus, $E(\hat{w}(s))$ is independent of the sequence $\{s_i\}$, and 
  $$E(\hat{w}(s))=\lim_{i\to\infty}E(w^{(i)}(s))=\lim_{s\to\infty}E(w(s))$$ 
  is independent of $s$. Moreover, since $|\hat{w}(s)|_{C^1(\mathbb{R}^n)}\leq C''$, $(s>-\log T+1)$, every term in $E(\hat{w}(s))$ is finite. So, the monotonicity formula 
\begin{equation}\label{mono}
    \int_a^b\int_{\mathbb{R}^n}|\hat w_s(s,y)|^2\rho dyds=E(\hat w(a))-E(\hat w(b)),\quad\forall a,b\in\mathbb R,
\end{equation}
  holds following the same proof of proposition 3 of \cite{gk85 asy sim}. This implies that $\hat{w}_s\equiv 0$ on $\mathbb{R}^{n+1}$. That is, $\hat{w}$ is a classical solution of (\ref{eq1}) independent of $s$ on $\mathbb{R}^n$. 
  
 Secondly, $u_t(x,0)\geq0$ in $\Omega$ implies that $u_t(x,t)\geq 0$ for $(x,t)\in\Omega\times(0,T)$ by maximum principle. By the second equation of (\ref{ui ut}), $(\tfrac{1}{p-1}w+\tfrac{1}{2}y_iw_{i})(y,s)=(T-t)^{\frac{p}{p-1}}u(x,t)\geq 0$, $(y,s)\in D_{a,T,\Omega}$, which implies that $\hat{H}:=\tfrac{1}{p-1}\hat{w}(y)+\tfrac{1}{2}y_i\hat{w}_i(y)\geq 0$, $y\in\mathbb{R}^n$ by passing to the limit. Since $L_w\hat{H}=\hat{H}$, the Harncak inequality implies that $\hat{H}\equiv0$ or $\hat{H}>0$ in $\mathbb{R}^n$. If $\hat{H}\equiv 0$ in $\mathbb{R}^n$, we have $\Delta\hat{w}+p|\hat{w}|^{p-1}\hat{w}=\hat{H}=0$ by (\ref{eq1}). Since $\hat{w}\geq 0$, and $\frac{1}{p-1}\hat{w}(0)=\hat H(0)=0$, using Harnack inequality again, we get $\hat{w}\equiv 0$ in $\mathbb{R}^n$. If $\hat{H}>0$ in $\mathbb{R}^n$, we note that $\hat{w}\leq C$ by the previous paragraph. In particular, $\int_{\mathbb{R}^n}|\hat{w}|^{2p}e^{-\frac{|y|^2}{4}}dy<\infty$. Thus, we can apply Theorem \ref{main theo} to conclude that $\hat{w}\equiv \kappa$. 
 
At last, we note that $E(\hat{w})=\lim_{s\to\infty} E(w(s))$ is independent of $s_i$ and
 \begin{align*}
     E(\kappa)
     =&(\tfrac{1}{2}-\tfrac{1}{p+1})\kappa^{p+1}\int_{\mathbb{R}^n}e^{-\frac{|y|^2}{4}}dy>0=E(0),\quad p>1.
 \end{align*}
Thus,$\hat{w}$ is also independent of the sequence $\{s_i\}$. This implies that $w(y,s)\to 0$ or $\kappa$ as $s\to\infty$ in $C^{2,\alpha}_{loc}(\mathbb{R}^n)$. However, the first case can't happen by \cite{giga89}, since $(a,T)$ is a blowup point. The $C^{\infty}_{loc}$ convergence follows from a standard bootstrapping argument.  
\end{proof}

\section{Positive speed and linearly stability of self-similar solutions}
In this section, we define linear stability of self-similar solutions and discuss its relation with positive speed. As usual, we assume that $u$ is a smooth self-similar solution of \eqref{eq0} w.r.t. $(a,T)$ on $\mathbb R^n\times(0,T)$, and $L=L_w$ is the linearized operator of $F$ defined in \eqref{op Lw}.
\begin{definition}    
A smooth self-similar solution $u$ of (\ref{eq0}) is linearly stable if the only possible unstable eigenfunctions of $L$ corresponds to the re-centering of space and time\footnote{Here, a nonzero $C^2$ function $v$ is called an unstable eigenfunction of $L$ if $Lv=-\lambda v$ on $\mathbb R^n$ with $\lambda<0$. We allow the possibility that $L$ has no unstable eigenfunctions, that is, $L$ has no eigenfunctions with negative eigenvalue.}.
\end{definition}
By Lemma \ref{lemma eig t sp}, we know that $w_i$ ($i=1,2,\cdots n$) and $\tfrac{1}{p-1}w+\tfrac{1}{2}y\cdot\nabla w$ are the possible (when they are not identically zero) eigenfunctions of $L$ which correspond to the re-centering of space and time variable respectively. Thus, we have the equivalent definition of linearly stable self-similar solutions.
\begin{definition}
    Suppose $u$ is a smooth self-similar solution of (\ref{eq0}) on $\mathbb{R}^n\times (0,T)$ w.r.t. $(a,T)$, it is called linearly stable if and only if the only possible unstable eigenfunctions of $L$ are $\frac{1}{p-1}w+\frac{1}{2}y\cdot\nabla w$ and $w_i$ $(i=1,2\cdots,n)$, where $w=w_{a,T}$ is defined in (\ref{wat}).
\end{definition}
To analyze the eigenfunctions via calculus of variations, we need to introduce appropriate Hilbert spaces and restrict ourselves to more specific cases. Let $\langle\cdot,\cdot\rangle_W$ be the inner product defined in \eqref{eq we inn}. Define
\begin{equation}
\begin{aligned}
     \langle f,g\rangle_{W,1}=\langle f,g\rangle_W+\sum_{i=1}^n\langle \nabla_i f,\nabla_i g\rangle_W,\\
     \|f\|_{W,0}=\langle f,f\rangle_W^{\frac{1}{2}},\quad \|f\|_{W,1}=\langle f,f\rangle_{W,1}^{\frac{1}{2}}
\end{aligned}
\end{equation}
for $f,g\in C_c^\infty(\mathbb R^n)$, where . Let $H_W^0(\mathbb R^n)$, $H_W^1(\mathbb R^n)$ be the Hilbert space given by completing $C_c^\infty(\mathbb R^n)$ by using $\|\cdot\|_{W,0}$ and $\|\cdot\|_{W,1}$ respectively.
\begin{lemma}
    For all $v\in C_c^\infty(\mathbb R^{n})$,
    \begin{equation}
         \int_{\mathbb R^{n}} v^2|y|^2 e^{-\frac{|y|^2}{4}}dy\leq 16\int_{\mathbb R^n}|\nabla v|^2 e^{-\tfrac{|y|^2}{4}}dy+4n\int_{\mathbb R^{n}}v^2e^{-\frac{|y|^2}{4}}dy.
    \end{equation}   
\end{lemma}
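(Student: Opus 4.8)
The plan is to prove this as a weighted Poincaré-type inequality obtained from integration by parts against the Gaussian weight $\rho(y)=e^{-|y|^2/4}$. The key identity is that $\nabla\cdot(y\,\rho)=(n-\tfrac{|y|^2}{2})\rho$, equivalently $\mathcal{L}(\tfrac{|y|^2}{4})=\tfrac{n}{2}-\tfrac{|y|^2}{4}$, so that $\tfrac{|y|^2}{2}\rho = (n - \nabla\cdot(y\rho))\cdot\tfrac12\cdot\ldots$; more directly I would start from $\operatorname{div}(y e^{-|y|^2/4}) = (n - \tfrac{|y|^2}{2})e^{-|y|^2/4}$ and integrate $v^2$ times this against nothing, i.e. apply the divergence theorem to the vector field $v^2 y\, e^{-|y|^2/4}$.

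Concretely, first I would write
\begin{equation*}
\int_{\mathbb R^n} v^2\Bigl(\tfrac{|y|^2}{2}-n\Bigr)e^{-\frac{|y|^2}{4}}\,dy
= -\int_{\mathbb R^n} v^2\,\operatorname{div}\!\bigl(y\,e^{-\frac{|y|^2}{4}}\bigr)\,dy
= \int_{\mathbb R^n} \langle y,\nabla(v^2)\rangle\, e^{-\frac{|y|^2}{4}}\,dy,
\end{equation*}
which is legitimate since $v$ has compact support (no boundary term). Then $\langle y,\nabla(v^2)\rangle = 2v\langle y,\nabla v\rangle$, and by Cauchy–Schwarz together with Young's inequality $2v\langle y,\nabla v\rangle \le \tfrac{1}{4}v^2|y|^2 + 4|\nabla v|^2$. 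Substituting and rearranging gives
\begin{equation*}
\int_{\mathbb R^n} v^2\tfrac{|y|^2}{2}e^{-\frac{|y|^2}{4}}\,dy - n\int_{\mathbb R^n} v^2 e^{-\frac{|y|^2}{4}}\,dy
\le \tfrac14\int_{\mathbb R^n} v^2|y|^2 e^{-\frac{|y|^2}{4}}\,dy + 4\int_{\mathbb R^n}|\nabla v|^2 e^{-\frac{|y|^2}{4}}\,dy,
\end{equation*}
so $\tfrac14\int v^2|y|^2 e^{-|y|^2/4}\,dy \le 4\int|\nabla v|^2 e^{-|y|^2/4}\,dy + n\int v^2 e^{-|y|^2/4}\,dy$. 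Multiplying through by $4$ yields exactly the claimed inequality with constants $16$ and $4n$.

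There is essentially no serious obstacle here; the only point requiring a word of care is the justification that the divergence-theorem step produces no boundary contribution and that all integrals are finite, both of which are immediate from $v\in C_c^\infty(\mathbb R^n)$. The choice of the Young's-inequality split ($\tfrac14 a^2 + 4 b^2 \ge 2ab$) is what produces the sharp-looking constant $16$ after clearing the factor; a different split would give the same shape with worse constants, which would also suffice. One could alternatively phrase the computation as the self-adjointness of $\mathcal L$ (Lemma \ref{int by part comp}) applied to $f=v^2$, $g=\tfrac{|y|^2}{4}$, using $\mathcal L(\tfrac{|y|^2}{4})=\tfrac n2-\tfrac{|y|^2}{4}$; this is the route I would actually write down since the paper has already set up that machinery.
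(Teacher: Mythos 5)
Your proof is correct and follows essentially the same route as the paper: both apply the divergence theorem to the vector field $v^2\,y\,e^{-|y|^2/4}$ and then use the Young's inequality split $2v\langle y,\nabla v\rangle\le\tfrac14 v^2|y|^2+4|\nabla v|^2$ to absorb the quadratic term, yielding the constants $16$ and $4n$ exactly as claimed.
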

\begin{proof}
    Since $v$ has compact support, we can choose $R$ large such that $v$ is supported in $B_{R}(O)$. By divergence theorem,
    \begin{equation}
    0=\int_{\mathbb R^n} \dv(yv^2e^{-\tfrac{|y|^2}{4}})dy=\int_{\mathbb R^n} (nv^2+2v\langle \nabla v,y\rangle-\tfrac{v^2}{2}|y|^2)e^{-\tfrac{|y|^2}{4}}dy 
    \end{equation}
    By rearranging terms and using Young's inequality,
    \begin{align*}
        \tfrac{1}{2}\int_{\mathbb R^n}v^2|y|^2 e^{-\tfrac{|y|^2}{4}}dy\leq n\int_{\mathbb{R}^n}v^2e^{-\tfrac{|y|^2}{4}}dy+4\int_{\mathbb R^n}|\nabla v|^2e^{-\tfrac{|y|^2}{4}}dy+\tfrac{1}{4}\int_{\mathbb R^n}v^2|y|^2e^{-\tfrac{|y|^2}{4}}dy
    \end{align*}
    Rearranging the above inequality gives the desired estimate.
\end{proof}
\begin{lemma}\label{lemma com emb}
    The natural embedding $\iota:H^1_W(\mathbb R^n)\hookrightarrow H^0_W(\mathbb R^n)$ is compact.
\end{lemma}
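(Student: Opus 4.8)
The plan is to follow the standard truncation-plus-Rellich argument for weighted Gaussian Sobolev spaces. Let $\{v_k\}$ be a bounded sequence in $H^1_W(\mathbb{R}^n)$, say $\|v_k\|_{W,1}\le M$ for all $k$; I must extract a subsequence converging in $\|\cdot\|_{W,0}$. First I would split the $H^0_W$-norm over a large ball $B_R$ and its complement. On the complement, I would use the previous lemma: since $e^{-|y|^2/4}\le R^{-2}|y|^2 e^{-|y|^2/4}$ on $\mathbb{R}^n\setminus B_R$, we get
\begin{equation*}
\int_{\mathbb{R}^n\setminus B_R} v_k^2 e^{-\frac{|y|^2}{4}}\,dy\le \frac{1}{R^2}\int_{\mathbb{R}^n} v_k^2|y|^2 e^{-\frac{|y|^2}{4}}\,dy\le \frac{1}{R^2}\bigl(16\|\nabla v_k\|_{W,0}^2+4n\|v_k\|_{W,0}^2\bigr)\le \frac{C M^2}{R^2},
\end{equation*}
which is uniformly small in $k$ once $R$ is large (the inequality in the previous lemma is stated for $C_c^\infty$, so strictly I would first extend it to all of $H^1_W$ by density, which is immediate since both sides are continuous in $\|\cdot\|_{W,1}$).

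Next, on the fixed ball $B_R$ the Gaussian weight $e^{-|y|^2/4}$ is bounded above and below by positive constants depending only on $R$, so the restriction map $H^1_W(\mathbb{R}^n)\to H^1(B_R)$ (unweighted) is bounded, and $\{v_k|_{B_R}\}$ is bounded in $H^1(B_R)$. By the classical Rellich–Kondrachov theorem, $H^1(B_R)\hookrightarrow L^2(B_R)$ is compact, so a subsequence of $\{v_k\}$ converges in $L^2(B_R)$, hence (again using the two-sided bound on the weight over $B_R$) in the weighted $L^2(B_R, e^{-|y|^2/4}dy)$ norm. A diagonal argument over $R=1,2,3,\dots$ then produces a single subsequence, still denoted $\{v_k\}$, that converges in weighted $L^2(B_R)$ for every $R\in\mathbb{N}$; in particular it is Cauchy there.

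Finally I would combine the two estimates: given $\varepsilon>0$, pick $R$ with $CM^2/R^2<\varepsilon^2/8$ (controlling both the tails of $v_k$ and of the limit, or of $v_j-v_k$ directly via the triangle inequality), and then pick $N$ so that $\|v_j-v_k\|_{L^2(B_R,\,e^{-|y|^2/4}dy)}<\varepsilon/2$ for $j,k\ge N$; adding the inside-ball and outside-ball contributions gives $\|v_j-v_k\|_{W,0}<\varepsilon$, so $\{v_k\}$ is Cauchy in the complete space $H^0_W(\mathbb{R}^n)$ and therefore converges. This shows $\iota$ is compact. I do not anticipate a serious obstacle here; the only point requiring minor care is the density/extension step so that the quadratic tail estimate of the previous lemma may be applied to elements of $H^1_W$ rather than only to $C_c^\infty$ functions, and the bookkeeping in the diagonal argument — both routine.
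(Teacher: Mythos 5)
Your argument is correct and is precisely the standard proof the paper is invoking: the paper's own ``proof'' is a one-line citation to Proposition B.2 of \cite{BW17} ``using the above lemma,'' and that argument is exactly your combination of the $|y|^2$-weighted tail estimate with Rellich--Kondrachov on balls and a diagonal extraction. The only point worth a word of care is that the extension of the tail inequality from $C_c^\infty$ to $H^1_W$ is most cleanly done via Fatou's lemma (lower semicontinuity of the left-hand side along an a.e.\ convergent subsequence) rather than asserting continuity of both sides, but this is routine and does not affect the proof.
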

\begin{proof}
    The proof is similar to that of proposition B.2 in \cite{BW17} by using the above lemma. 
\end{proof}
Then we consider the min-max characterization of the first eigenvalue of $L$. To do so, we need to define the weak solution of 
\begin{equation}\label{eq weak so}
    Lv=f
\end{equation}
for $f\in H^0_W(\mathbb R^n)$ in $H^1_W(\mathbb{R}^n)$. For this purpose, we assume that there is a constant $C>0$ such that
\begin{equation}\label{eq bound}
    |u(x,t)|\leq\frac{C}{(T-t)^{\frac{1}{p-1}}} \text{ on }\mathbb R^n\times(0,T)
\end{equation}
Or equivalently,
\begin{equation}\label{eq bound 2}
    |w(y)|\leq C\text{ for }y\in\mathbb R^{n}.
\end{equation}
Then we define 
\begin{definition}
    $v\in H^1_W(\mathbb R^n)$ is said to be a weak solution of \eqref{eq weak so} if 
    \begin{equation}
        \int_{\mathbb R^n}(\nabla v\nabla\phi+\tfrac{1}{p-1}v\phi-p|w|^{p-1}v\phi)e^{-\frac{|y|^2}{4}}dy=-\int_{\mathbb R^n}f\phi e^{-\frac{|y|^2}{4}} dy
    \end{equation}
    for all $\phi\in C_c^\infty(\mathbb R^n)$.
\end{definition}
Since $|w|$ is assumed to be bounded, every term in the above equality is finite, and the weak solution is well defined in $H^1_W(\mathbb R^n)$. Moreover, Lemma \ref{lemma com emb} and the standard theory for compact self-adjoint operators imply that $L$ has discrete eigenvalues $\lambda_1<\lambda_2\leq \lambda_3\cdots\leq \lambda_m\cdots\to\infty$ with eigenfunctions $\{v_i\}_{i=1}^\infty$ which form a basis of $H^0_W(\mathbb R^n)$. Also, the first eigenvalue of $L$ is given by 
\begin{equation}
    \lambda_1=\inf_{v\in H^1_W(\mathbb R^n)\setminus\{0\}}\frac{\int_{\mathbb R^n}(|\nabla v|^2+\frac{1}{p-1} v^2-p|w|^{p-1}v^2)e^{-\frac{|y|^2}{4}}dy}{\int_{\mathbb R^n}  v^2 e^{-\frac{|y|^2}{4}}dy}.
\end{equation}
Next, we state a lemma and a theorem about the first eigenfunction and eigenvalue of $L$. Before proving them, we note that by \eqref{eq bound 2}, Proposition 1' of \cite{gk85 asy sim} implies that 
\begin{equation}\label{eq boun de}
    |\nabla w|+|\nabla^2 w|\leq C'\text{ on }\mathbb R^n
\end{equation}
for some constant $C'>0$. Then we can use this bound and the method in \cite{cm12} to get
\begin{lemma}\label{lemma char 1st eig fun}
    There is positive function $v$ on $\mathbb R^{n}$ with $Lv=-\lambda_1 v$. Furthermore, if $\hat v$ is in $H^1_W(\mathbb{R}^n)$ and $L\hat v=-\lambda_1 \hat v$, then $\hat v=Cv$ for some $C\in\mathbb R$.
\end{lemma}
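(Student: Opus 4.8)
The plan is to establish the existence of a positive first eigenfunction via the direct method in the calculus of variations, and then to derive uniqueness (up to scaling) from a strict maximum principle argument. For existence, I would first note that the Rayleigh quotient
\[
\mathcal{R}[v]=\frac{\int_{\mathbb{R}^n}\bigl(|\nabla v|^2+\tfrac{1}{p-1}v^2-p|w|^{p-1}v^2\bigr)e^{-\frac{|y|^2}{4}}dy}{\int_{\mathbb{R}^n}v^2e^{-\frac{|y|^2}{4}}dy}
\]
is bounded below because $|w|$ is bounded by \eqref{eq bound 2}, so $p|w|^{p-1}\leq pC^{p-1}$, giving $\mathcal{R}[v]\geq\tfrac{1}{p-1}-pC^{p-1}=:-\Lambda$. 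Take a minimizing sequence $\{v_k\}\subset H^1_W(\mathbb{R}^n)$ normalized by $\|v_k\|_{W,0}=1$; then $\mathcal{R}[v_k]$ bounded together with the normalization shows $\|\nabla v_k\|_{W,0}^2\leq\mathcal{R}[v_k]+pC^{p-1}$ is bounded, so $\{v_k\}$ is bounded in $H^1_W(\mathbb{R}^n)$. By Lemma \ref{lemma com emb} the embedding $H^1_W\hookrightarrow H^0_W$ is compact, so after passing to a subsequence $v_k\to v$ strongly in $H^0_W$ (hence $\|v\|_{W,0}=1$, so $v\not\equiv 0$) and weakly in $H^1_W$; lower semicontinuity of the $H^1_W$-norm under weak convergence and strong convergence of the zeroth-order terms give $\mathcal{R}[v]\leq\liminf\mathcal{R}[v_k]=\lambda_1$, so $v$ is a minimizer. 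The Euler--Lagrange equation of $\mathcal{R}$ is precisely the weak form of $Lv=-\lambda_1 v$, and by \eqref{eq boun de} and elliptic regularity (Schauder theory applied to $\Delta v-\tfrac12 y\cdot\nabla v=(\tfrac{1}{p-1}-p|w|^{p-1}-\lambda_1)v$, whose coefficients are bounded and H\"older) $v$ is a smooth classical solution.

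To obtain positivity, I would replace $v$ by $|v|$: since $|\nabla|v||=|\nabla v|$ a.e. and the zeroth-order terms depend only on $v^2$, $|v|$ is also a minimizer, hence also a nonnegative weak, thus smooth, solution of $Lv=-\lambda_1 v$. Rewrite this as $\Delta|v|-\tfrac12 y\cdot\nabla|v|+(p|w|^{p-1}-\tfrac{1}{p-1}+\lambda_1)|v|=0$; adding a large constant $\Lambda_0$ to both the coefficient and absorbing, one sees $\Delta|v|-\tfrac12 y\cdot\nabla|v|-\Lambda_0|v|\leq 0$ with $|v|\geq 0$, $|v|\not\equiv 0$, so the strong maximum principle (Harnack inequality, valid since the coefficients are locally bounded on $\mathbb{R}^n$) forces $|v|>0$ everywhere. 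Thus $v$ may be taken strictly positive on $\mathbb{R}^n$.

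For uniqueness up to a constant, suppose $\hat v\in H^1_W(\mathbb{R}^n)$ also satisfies $L\hat v=-\lambda_1\hat v$. The standard argument: first, $|\hat v|$ is again a minimizer (same computation), hence a nonnegative solution, hence by the maximum principle either $\hat v\equiv 0$ or $|\hat v|>0$, which means $\hat v$ does not change sign; WLOG $\hat v>0$. Now consider the quantity obtained by testing: for $c\in\mathbb{R}$ the function $\hat v-cv$ also solves $L(\hat v-cv)=-\lambda_1(\hat v-cv)$, and one can choose $c$ so that $\hat v-cv$ changes sign (e.g.\ $c=\inf(\hat v/v)$ makes $\hat v-cv\geq 0$ with $\inf(\hat v-cv)/v=0$); but then $\hat v-cv$ is a nonnegative solution that vanishes somewhere, so by the strong maximum principle $\hat v-cv\equiv 0$. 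Alternatively, and perhaps cleaner, one uses the Picone-type / logarithmic argument of \cite{cm12}: set $\psi=\log(\hat v/v)$ and integrate $v^2|\nabla\psi|^2 e^{-|y|^2/4}$ against the equations to show $\nabla\psi\equiv 0$; the finiteness of all integrals needed for the integration by parts (Lemma \ref{int by part w2}) follows from $\hat v,v\in H^1_W$, the bound \eqref{eq boun de}, and the boundedness of $|w|$. Either way $\hat v=Cv$.

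The main obstacle I expect is the justification of the integration-by-parts steps on the non-compact domain $\mathbb{R}^n$ in the uniqueness argument — establishing exactly the $L^1$-with-weight bounds required to apply Lemma \ref{int by part w2} to the products $v\log(\hat v/v)$ and their gradients, and ensuring $\log(\hat v/v)$ and its derivatives do not blow up at infinity. This is handled by combining the $H^1_W$ membership of $v$ and $\hat v$ with the pointwise derivative bounds \eqref{eq boun de} and the positivity lower bounds coming from the Harnack inequality on bounded balls; the decay of the Gaussian weight does the rest. The rest of the proof is routine elliptic PDE theory.
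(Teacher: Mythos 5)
Your argument is correct in substance but takes a genuinely different route from the paper, which simply defers to Lemma 9.25 of \cite{cm12} (with $\tfrac12$ replaced by $-\tfrac{1}{p-1}$ and $|A|^2$ by $p|w|^{p-1}$): there the positive eigenfunction is produced by exhausting $\mathbb{R}^n$ by balls, taking positive first Dirichlet eigenfunctions normalized at a point, and passing to a limit via the Harnack inequality and elliptic estimates; uniqueness is then obtained by an integration-by-parts (logarithmic-derivative/Picone) identity tested against the positive limit function. You instead run the direct method in $H^1_W(\mathbb{R}^n)$, which is legitimate here precisely because the paper has already established the compact embedding (Lemma \ref{lemma com emb}) and because the potential $p|w|^{p-1}$ is bounded by \eqref{eq bound 2} --- two features absent in the general setting of \cite{cm12}, where $|A|^2$ may be unbounded and the bottom of the spectrum need not be attained in the weighted space. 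Your route is therefore more self-contained in this paper's setting and yields for free that the minimizer lies in $H^1_W(\mathbb{R}^n)$. One caveat: your first uniqueness variant, choosing $c=\inf(\hat v/v)$ so that $\hat v-cv$ ``vanishes somewhere,'' is not sound on the noncompact domain $\mathbb{R}^n$ --- the infimum need not be attained (and may be $0$), in which case $\hat v-cv$ is strictly positive and the strong maximum principle gives nothing. Your alternative (the logarithmic/Picone argument justified via Lemma \ref{int by part w2}, using \eqref{eq boun de} and the $H^1_W$ membership of both eigenfunctions) does close the gap, and in fact an even shorter ending is available in your variational framework: if $\hat v$ were not a multiple of $v$, then $\hat v-\langle\hat v,v\rangle_W\|v\|_{W,0}^{-2}\,v$ would be a nonzero first eigenfunction in $H^1_W(\mathbb{R}^n)$ orthogonal to $v$; by your $|\cdot|$-and-Harnack argument it would have a strict sign, contradicting orthogonality to the positive function $v$.
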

\begin{proof}
    The proof is similar to that of Lemma 9.25 of \cite{cm12} if we replace $\frac{1}{2}$ by $-\frac{1}{p-1}$ and $|A|^2$ by $p|w|^{p-1}$ there.
\end{proof}
\begin{theorem}\label{theorem char bot of eigv}
    If $H:=\tfrac{1}{p-1}w+\frac{1}{2}y\cdot\nabla w$ changes sign, then $\lambda_1<-1$.
\end{theorem}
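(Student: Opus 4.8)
The plan is to combine two facts already established in the excerpt. By Lemma~\ref{lemma eig t sp}, as soon as $H\not\equiv 0$ we have $LH=H$, so $-1$ belongs to the spectrum of $L$; and by Lemma~\ref{lemma char 1st eig fun} the bottom eigenvalue $\lambda_1$ is simple, with a first eigenfunction of one sign. If $H$ changes sign, then $H$ cannot be a multiple of the first eigenfunction, so the eigenvalue $-1$ must sit strictly above $\lambda_1$.

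First I would verify that $H\in H^1_W(\mathbb R^n)$ and that it is a genuine weak eigenfunction. Under the standing assumption \eqref{eq bound 2}, the bound \eqref{eq boun de} gives $|\nabla w|+|\nabla^2 w|\le C'$ on $\mathbb R^n$, so $H=\tfrac{1}{p-1}w+\tfrac12\,y\cdot\nabla w$ and each component of $\nabla H$ grow at most linearly in $|y|$; since the Gaussian weight $e^{-|y|^2/4}$ dominates any polynomial, $[H^2+|\nabla H|^2]_W<\infty$. Multiplying by the cut-offs $\eta_R$ used in Lemma~\ref{int by part w2} and letting $R\to\infty$ shows that $H$ lies in the completion $H^1_W(\mathbb R^n)$, and applying Lemma~\ref{int by part comp} with a test function $\phi\in C_c^\infty(\mathbb R^n)$ and $g=H$ turns the classical identity $LH=H$ into the weak one; hence $H$ is a weak eigenfunction with eigenvalue $-1$. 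Because $H$ changes sign it is not identically zero, so $-1$ really is an eigenvalue of $L$, and therefore $\lambda_1\le -1$.

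Then I would argue by contradiction. Suppose $\lambda_1=-1$. Then $H\in H^1_W(\mathbb R^n)$ satisfies $LH=-\lambda_1 H$, so by the uniqueness part of Lemma~\ref{lemma char 1st eig fun} we get $H=Cv$ for some $C\in\mathbb R$, where $v>0$ on $\mathbb R^n$. Since $H\not\equiv 0$, we have $C\neq 0$, and hence $H$ has a fixed sign, contradicting the hypothesis that $H$ changes sign. Therefore $\lambda_1\neq -1$, and together with $\lambda_1\le -1$ this gives $\lambda_1<-1$.

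The only slightly delicate point is the second paragraph: checking that $H$ genuinely belongs to $H^1_W(\mathbb R^n)$ and is a weak (not merely classical) eigenfunction, which is exactly what is needed in order to apply the spectral theory coming from Lemma~\ref{lemma com emb} and the simplicity and positivity of the first eigenfunction from Lemma~\ref{lemma char 1st eig fun}. Once this is in place the conclusion is immediate; note in particular that the argument uses only the a priori bound \eqref{eq bound 2} and the structural identity $LH=H$, not the positive-speed hypothesis.
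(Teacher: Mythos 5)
Your argument is correct and is essentially the same as the paper's: the paper simply defers to Theorem 9.36 of \cite{cm12} (with $\tfrac12$ replaced by $-\tfrac{1}{p-1}$ and $|A|^2$ by $p|w|^{p-1}$), and that proof is exactly the one you wrote out — check that $H,\nabla H$ lie in the weighted $L^2$ space so that $H\in H^1_W(\mathbb R^n)$ is a genuine eigenfunction with eigenvalue $-1$, conclude $\lambda_1\le -1$, and rule out equality via the simplicity and positivity of the first eigenfunction from Lemma~\ref{lemma char 1st eig fun}. No gaps.
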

\begin{proof}
    The proof is similar to that of Theorem 9.36 of \cite{cm12} if we replace $\frac{1}{2}$ by $-\frac{1}{p-1}$ and $|A|^2$ by $p|w|^{p-1}$ there. In fact, by \eqref{eq boun de}, $|A|:=(p|w|^{p-1})^{\frac{1}{2}}$, $H$, $\nabla H$ are in the weighted $L^2$ space, and the proof of Theorem 9.36 in \cite{cm12} goes through. 
\end{proof}

\begin{corollary}\label{coro lin sta con}
     Suppose $u$ is a smooth self-similar solution of (\ref{eq0}) on $\mathbb{R}^n\times (0,T)$ w.r.t. $(a,T)$ satisfying \eqref{eq bound}, and $u$ is linearly stable. Then of $w\equiv 0$ or $\pm\kappa$.
\end{corollary}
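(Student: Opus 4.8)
The plan is to use linear stability to force $H:=\tfrac{1}{p-1}w+\tfrac12 y\cdot\nabla w$ to have a fixed sign, and then to invoke Theorem~\ref{main theo}, with the a priori bound~\eqref{eq bound} supplying the integral hypothesis of that theorem almost for free; the degenerate possibility $H\equiv0$ will be disposed of by a separate homogeneity argument.

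First I would observe that under~\eqref{eq bound}, equivalently~\eqref{eq bound 2}, the estimate~\eqref{eq boun de} holds, so $H$ grows at most linearly while each $w_i$ is bounded with bounded gradient; hence all of these functions lie in $H^1_W(\mathbb R^n)$, and by Lemma~\ref{lemma eig t sp} they are (classical) eigenfunctions of $L$ with eigenvalues $-1$ and $-\tfrac12$, to which the variational spectral theory of Section~5 applies. Linear stability then forces every negative eigenvalue of $L$ to equal $-1$ or $-\tfrac12$ (and this is vacuous when $L$ has none), so in all cases $\lambda_1\ge-1$; by the contrapositive of Theorem~\ref{theorem char bot of eigv}, $H$ does not change sign on $\mathbb R^n$. (One can avoid Theorem~\ref{theorem char bot of eigv} altogether: if $H\not\equiv0$ then $H\in H^1_W(\mathbb R^n)$ forces $\lambda_1\le-1$, stability forces $\lambda_1=-1$, and the simplicity of $\lambda_1$ from Lemma~\ref{lemma char 1st eig fun} identifies $H$ with a constant multiple of the positive first eigenfunction.)

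Replacing $w$ by $-w$ if needed is harmless, since $-w$ again solves~\eqref{eq1}, $L_{-w}=L_w$, and the re-centering eigenfunctions only change sign, so~\eqref{eq bound} and linear stability persist while $H$ becomes $-H$; I may therefore assume $H\ge0$. Rewriting the eigenfunction relation $LH=H$ as the linear elliptic equation $\Delta H-\tfrac12 y\cdot\nabla H+\big(p|w|^{p-1}-\tfrac{1}{p-1}-1\big)H=0$, the strong maximum principle (valid since the zeroth-order coefficient is bounded by~\eqref{eq bound 2}) gives the dichotomy $H\equiv0$ or $H>0$ on $\mathbb R^n$.

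If $H>0$ then $w$ satisfies~\eqref{eq posi}, and~\eqref{eq bound 2} gives $\int_{\mathbb R^n}|w|^{2p}e^{-|y|^2/4}\,dy\le C^{2p}\int_{\mathbb R^n}e^{-|y|^2/4}\,dy<\infty$, so condition~(1) of Theorem~\ref{main theo} is met and $w\equiv\kappa$. If instead $H\equiv0$ then $y\cdot\nabla w=-\tfrac{2}{p-1}w$, i.e.\ $w$ is homogeneous of degree $-\tfrac{2}{p-1}<0$; integrating this relation along each ray from the origin, a point where $w\ne0$ would make $|w|$ unbounded near the origin, contradicting continuity, so $w\equiv0$. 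Undoing the possible sign change yields $w\equiv0$ or $w\equiv\pm\kappa$. I expect the only mildly delicate steps to be the routine bookkeeping that places $H$ and the $w_i$ in $H^1_W(\mathbb R^n)$ so that the spectral machinery of Section~5 genuinely applies to them, and the check that the branch $H\equiv0$ collapses to $w\equiv0$ rather than to a nontrivial singular homogeneous solution; the rest is a direct assembly of Theorems~\ref{main theo} and~\ref{theorem char bot of eigv} together with Lemmas~\ref{lemma char 1st eig fun} and~\ref{lemma eig t sp}.
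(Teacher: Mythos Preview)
Your proposal is correct and follows essentially the same route as the paper: both use linear stability to force $\lambda_1\ge-1$, invoke Theorem~\ref{theorem char bot of eigv} (or equivalently the simplicity of the first eigenfunction from Lemma~\ref{lemma char 1st eig fun}) to rule out sign changes of $H$, apply a strong maximum principle/Harnack argument to split into $H>0$ versus $H\equiv0$, feed the first case into Theorem~\ref{main theo} via the boundedness of $w$, and dispose of the second case by the homogeneity $y\cdot\nabla w=-\tfrac{2}{p-1}w$. Your treatment of the $H\equiv0$ branch (blow-up along rays contradicting smoothness at the origin) is the same computation the paper carries out in spherical coordinates, just phrased more compactly.
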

\begin{proof}
     Since $u$ is linearly stable, the possible negative eigenvalues has eigenfunctions comes from the re-centering of time and space variable respectively, which are $H:=\frac{1}{p-1}w+\frac{1}{2}y\cdot\nabla w$ and $w_i(i=1,2\cdots,n)$ by Lemma \ref{lemma eig t sp}.
    By \eqref{eq bound 2} and \eqref{eq boun de}, $H,w_i\in H^1_W(\mathbb R^n)$ ($i=1,2,\cdots,n$). Thus $-1$ and $-\frac{1}{2}$ are the only two possible negative eigenvalues of $L$.  In particular, the first eigenvalue $\lambda_1\geq-1$. 
    If $H$ changes sign, then $\lambda_1<-1$ by theorem \ref{theorem char bot of eigv}, which is a contradiction. Thus, $H$ doesn't change the sign. 
    
    If $H\not\equiv 0$, then $H$ is the first eigenfunction, and $\lambda_1=-1$.  Since $H\in H^1_W(\mathbb R^n)$, the uniqueness in Lemma \ref{lemma char 1st eig fun} implies that $H>0$ (or $H<0$) on $\mathbb R^n$. Thus, $w=\pm \kappa$ by Theorem \ref{main theo}.
    
    Conversely, if $H\equiv 0$, let $(r,\theta)\in\mathbb R^+\times\mathbb S^{n-1}$ be the spherical coordinates on $\mathbb R^n$, then 
    \begin{align*}
    &\tfrac{1}{p-1}(wr^{\frac{2}{p-1}})+\tfrac{1}{2}y\cdot\nabla (wr^{\frac{2}{p-1}})\\
    =&\tfrac{1}{p-1}wr^{\frac{2}{p-1}}+\tfrac{1}{2}y\cdot(r^{\frac{2}{p-1}}\nabla w+\frac{2}{p-1}wr^{\frac{2}{p-1}-1}\frac{y}{r})\\
    =&\frac{1}{p-1}wr^{\frac{2}{p-1}}.
    \end{align*}
That is, 
\begin{align*}
    \frac{r}{2}\frac{\partial(wr^{\frac{2}{p-1}})}{\partial r}=\frac{1}{2}y\cdot\nabla (wr^{\frac{2}{p-1}})=0.
\end{align*}
This implies that $wr^{\frac{2}{p-1}}=f(\theta)+C_1$ for some constant $C_1$ and smooth function $f(\theta)$ defined on $\mathbb S^{n-1}$. Equivalently $w=r^{-\frac{2}{p-1}}(f(\theta)+C_1)$. Letting $r\to 0$ and using the fact that $w(0)=(p-1)H(0)=0$, we have $f(\theta)+C_1\equiv0$. Thus, $w=r^{-\frac{2}{p-1}}(f(\theta)+C_1)\equiv0$. 
\end{proof}
We have the following result which relates linearly stable self-similar solutions and self-similar solutions with positive speed.
\begin{corollary}
     Suppose $u$ is a smooth self-similar solution of (\ref{eq0}) on $\mathbb{R}^n\times (0,T)$ w.r.t. $(a,T)$ satisfying \eqref{eq bound}, and $u$ is linearly stable which is not identically zero, then either $-u$ or $u$ has positive speed. 
\end{corollary}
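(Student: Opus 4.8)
The plan is to reduce the statement entirely to the classification already obtained in Corollary \ref{coro lin sta con}. First I would invoke that corollary: since $u$ is a smooth self-similar solution of \eqref{eq0} on $\mathbb R^n\times(0,T)$ satisfying the bound \eqref{eq bound} and is linearly stable, it gives $w\equiv 0$ or $w\equiv\pm\kappa$, where $w=w_{a,T}$ is the profile defined in \eqref{wat}. Because $u$ is assumed not to be identically zero, the case $w\equiv 0$ is excluded, so we are left with $w\equiv\kappa$ or $w\equiv-\kappa$.

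Next I would read off the sign of the speed from the formula \eqref{ui ut}. For a self-similar solution $w_s\equiv 0$, so the second identity in \eqref{ui ut} reads $u_t(x,t)=(T-t)^{-\frac{p}{p-1}}H$ with $H=\tfrac{1}{p-1}w+\tfrac12 y_iw_i$. When $w\equiv\kappa$ is constant, $\nabla w\equiv 0$, hence $H\equiv\tfrac{1}{p-1}\kappa=\tfrac{1}{p-1}\big(\tfrac{1}{p-1}\big)^{\frac{1}{p-1}}>0$, and therefore $u_t(x,t)>0$ on all of $\mathbb R^n\times(0,T)$; that is, $u$ has positive speed. When $w\equiv-\kappa$, the same computation gives $H\equiv-\tfrac{1}{p-1}\kappa<0$, so $u_t<0$ everywhere, which means $(-u)_t>0$. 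Observing that $-u$ is again a smooth self-similar solution of \eqref{eq0} with respect to $(a,T)$ (since $|u|^{p-1}u$ is odd in $u$, its profile being $-w$), this says precisely that $-u$ has positive speed. These two cases exhaust the possibilities and give the corollary.

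I do not expect a genuine obstacle here, since both steps are immediate given Corollary \ref{coro lin sta con} and the explicit identity \eqref{ui ut}; if forced to name the delicate point, it is simply the bookkeeping observation that the hypothesis ``$u$ not identically zero'' is exactly what discards the degenerate alternative $w\equiv 0$, and that replacing $u$ by $-u$ preserves the class of self-similar solutions of \eqref{eq0}, so that the dichotomy ``$u$ or $-u$ has positive speed'' is symmetric and complete.
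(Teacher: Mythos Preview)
Your proposal is correct and follows essentially the same approach as the paper: invoke Corollary~\ref{coro lin sta con} to reduce to $w\equiv\pm\kappa$ after excluding $w\equiv 0$, and then read off the sign of $u_t$ from the identity in \eqref{ui ut}. The paper's proof is nearly identical, only slightly terser in that it does not spell out that $-u$ again solves \eqref{eq0}.
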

\begin{proof}
    If $u$ is a non-zero self-similar solution, then $w=(T-t)^{\frac{1}{p-1}}u$ is not identically zero. By the corollary above, $w=\kappa$ or $-\kappa$. By \eqref{ui ut} , $u_t=\tfrac{1}{p-1}(T-t)^{-\frac{p}{p-1}}\kappa$ or $-u_t=\tfrac{1}{p-1}(T-t)^{-\frac{p}{p-1}}\kappa$ for $(x,t)\in\mathbb R^n\times(0,T)$. That is, $u$ or $-u$ has positive speed.
\end{proof}

\end{document}